\def\ps@pprintTitle{%
 \let\@oddhead\@empty
 \let\@evenhead\@empty
 \def\@oddfoot{}%
 \let\@evenfoot\@oddfoot}
\numberwithin{equation}{section}
\newtheorem{definition}{Definition}[section]
\newtheorem{theorem}[definition]{Theorem}
\newtheorem{lemma}[definition]{Lemma}
\newtheorem{corollary}[definition]{Corollary}
\newtheorem{example}[definition]{Example}
\newtheorem{remark}[definition]{Remark}
\DeclareMathOperator\spn{span}
\begin{document}

\begin{frontmatter}

\title{\LARGE A constrained gentlest ascent dynamics and its applications to finding excited states of Bose--Einstein condensates}

\author[hunnu,scnu]{Wei Liu}
\ead{wliu@m.scnu.edu.cn}

\author[hunnu]{Ziqing Xie}
\ead{ziqingxie@hunnu.edu.cn}

\author[hunnu]{Yongjun Yuan\corref{cor}}
\ead{yuanyongjun0301@163.com}
\cortext[cor]{Corresponding author.}

\address[hunnu]{Key Laboratory of Computing and Stochastic Mathematics (Ministry of Education), School of Mathematics and Statistics, Hunan Normal University, Changsha, Hunan 410081, PR China}

\address[scnu]{\emph{Present address}: South China Research Center for Applied Mathematics and Interdisciplinary Studies, South China Normal University, Guangzhou 510631, PR China}

\begin{abstract}
In this paper, the gentlest ascent dynamics (GAD) developed in W. E and X. Zhou (2011) \cite{EZ2011NL} is extended to a constrained gentlest ascent dynamics (CGAD) to find constrained saddle points with any specified Morse indices. It is proved that the linearly stable steady state of the proposed CGAD is exactly a nondegenerate constrained saddle point with a corresponding Morse index. Meanwhile, the locally exponential convergence of an idealized CGAD near nondegenerate constrained saddle points with corresponding indices is also verified. The CGAD is then applied to find excited states of single-component Bose--Einstein condensates (BECs) in the order of their Morse indices via computing constrained saddle points of the corresponding Gross--Pitaevskii energy functional under the normalization constraint. In addition, properties of the excited states of BECs in the linear/nonlinear cases are mathematically/numerically studied. Extensive numerical results are reported to show the effectiveness and robustness of our method and demonstrate some interesting physics. 
\end{abstract}

\begin{keyword}
constrained saddle points, 
constrained gentlest ascent dynamics, 
linear stability, 
Bose--Einstein condensates, 
excited states
\end{keyword}

\end{frontmatter}

\section{Introduction} 
Saddle points appear widely in various scientific fields as, for example, excited states in atomic, molecular and optical systems or transition states in chemical reactions. Particularly, the index-1 saddle point is a central concept in the study of rare events, which corresponds to the transition state between metastable states in randomly perturbed system \cite{ERV2002PRB,EZ2011NL}. In practice, excited states in some scenarios only occur instantaneously. And, transition states usually occur with very low probability. Owning to these difficulties in direct experimental observation, the effective numerical search of saddle points has attracted more and more attentions. Different numerical algorithms for finding saddle points have been carried out in the literature in recent decades, most of which are related to unconstrained saddle points. However, many physical/chemical/biological systems in practical scientific problems are constrained by one or more physical constraints, e.g., the wave function of a Bose--Einstein condensate (BEC) is constrained by one or more normalization conditions \cite{DGPS1999RMP,BC2013KRM}.  And, the volume and surface area of a biological vesicle membrane are fixed to be prescribed constants in the phase field model \cite{DLW2004JCP,CS2018SISC}. This motivates us to concern finding constrained saddle points.

In terms of numerical methods for finding unconstrained saddle points of given nonconvex energy functionals or multiple unstable solutions of nonlinear partial differential equations, we refer to the mountain-pass algorithm \cite{CM1993NATMA}, the high-linking algorithm \cite{DCC1999NA}, the local minimax method (LMM) \cite{LZ2001SISC}, the search extension method \cite{CX2004CMA}, the bifurcation method \cite{YLZ2008SCSA}, the string method \cite{ERV2002PRB}, the gentlest ascent dynamics (GAD) \cite{EZ2011NL}, the dimer method \cite{HJ1999JCP} and the shrinking dimer dynamics (SDD) \cite{ZD2012SINUM}, etc. Typically, the GAD developed by E and Zhou \cite{EZ2011NL} is a continuous dynamical system that describes the escape from the attractive basins of stable invariant sets. It is proved that the linearly stable steady state of the GAD proposed in \cite{EZ2011NL} is exactly an index-1 saddle point. And, due to its simplicity and effectiveness, the GAD has been applied to compute index-1 saddle points in many problems \cite{LZZ2013MMS,LLY2015JCP,ZRSD2016npj}. Several variants of the GAD such as the iterative minimization algorithm \cite{GLZ2016JCP} and the multiscale GAD \cite{GZ2017CMS}, were presented in literature. In \cite{QB2014TCA}, Quapp and Bofill proposed a generalized GAD algorithm that can compute unconstrained high-index saddle points. In addition, the SDD proposed by Zhang and Du \cite{ZD2012JCP} is closely related to the GAD. In fact, the SDD can be obtained by approximating the Hessian in the formulation of the GAD with first-order derivatives and introducing an additional dynamics for shrinking the length of the so-called dimer. Recently, Yin, Zhang and Zhang \cite{YZZ2019SISC} extended the SDD to find unconstrained high-index saddle points, and proposed a high-index optimization-based shrinking dimer (HiOSD) method.

There have existed several effective numerical methods in the literature to find constrained saddle points. In \cite{DZ2009CMS}, Zhang and Du proposed a constrained string method to compute the minimum energy path (MEP) with given constraints. In this way, the index-1 constrained saddle point given by the local maximizer of the energy functional on the MEP can be obtained accordingly. In a subsequent work of \cite{ZD2012SINUM}, Zhang and Du also proposed a constrained SDD (CSDD) \cite{ZD2012JCP} to search index-1 constrained saddle points. In \cite{LLY2015JCP}, Li, Lu and Yang modified the GAD to find index-1 saddle points of the Kohn--Sham density functional under the orthonormality constraints. Other numerical methods for finding constrained saddle points include the LMM based on the Rayleigh quotient or the active Lagrangian \cite{YZ2007SISC,YZ2008SISC}, the LMM using virtual geometric objects \cite{VGOLMM2}, and the Ljusternik--Schnirelman minimax algorithm \cite{Yao2019ACM}. These methods can be regarded as the variants of the original LMM developed by Li and Zhou in \cite{LZ2001SISC} and corresponding two-level optimization problems have to be solved. In summary, the above mentioned methods are mainly used to compute index-1 constrained saddle points or their efficiency are needed to be further improved. Thus, efficient numerical methods as well as the corresponding theoretical analysis are still called for to compute the general high-index constrained saddle points.

One of the important applications of computing constrained saddle points is to find the excited states of BECs. The BEC was first realized experimentally in dilute weakly interacting gases in 1995 \cite{AEMWC1995Science,BSTH1995PRL,DMADDKK1995PRL}. As is known, one of the basic problems in numerical studies of BEC is to determine the stationary states, i.e., the critical points of the energy functional under certain normalization constraints, by the mean field Gross--Pitaevskii (GP) theory. In the physics literatures, the stationary state with the lowest energy is called the ground state of BEC, whereas the stationary states with higher energies are usually called excited states. In the past two decades, based on the Gross--Pitaevskii equations (GPEs), many effective numerical methods for computing the ground states of BECs have been developed, as reviewed in, e.g., \cite{BC2013KRM}. However, the numerical methods for finding excited states of BECs are still relatively limited. The normalized gradient flow or the imaginary time evolution method \cite{BD2004SISC}, as one of the most popular techniques for computing the ground states of BECs, has been extended to compute the `first' excited states of single-component BECs with symmetries, see, e.g., \cite{BD2004SISC,ABDR2017CPC}. In addition, some continuation algorithms \cite{CC2007CPC,CCW2011JCP} and Newton-based iterative algorithm \cite{MGL2013CPC} are also designed to compute excited states of BECs. However, the convergence of these methods depend on the choice of initial data, and more efficient and accurate methods to compute excited states of BECs are still worthwhile explored.

In this paper, we are interested in developing a continuous dynamical system to stably search for constrained saddle points with any specified Morse indices. Due to the difficulties caused by constraints, instability, nonlinearity and nonconvexity, it is quite challenging to find constrained saddle points with general constraints in a stable way, especially for high-index ones. Inspired by the works of the original GAD \cite{EZ2011NL} for index-1 unconstrained saddle points and the CSDD \cite{ZD2012JCP} for index-1 constrained saddle points, we are aimed to propose a constrained gentlest ascent dynamics (CGAD) to compute general constrained saddle points with any specified indices and analyze its linear stability and local convergence. Further, we apply the CGAD to simulate excited states of BECs to demonstrate its effectiveness and robustness and then illustrate an interesting problem, i.e., the relation among the GP energies, chemical potentials and Morse indices of the excited states (as constrained saddle points) of BECs. In fact, it was found numerically that both the GP energy and chemical potential of the excited state increase with the increase of its Morse index, whereas the excited states with the same index may be at different energy levels.

The paper is organized as follows. In section~\ref{sec:cspmi}, we describe the definitions of constrained saddle points and their Morse indices. In section~\ref{sec:cgad}, we briefly review the original GAD and construct the CGAD to search for index-$k$ constrained saddle points. In section~\ref{sec:stab}, the mathematical justifications of the CGAD, including the linear stability and the local convergence of an idealized CGAD, are analyzed. In section~\ref{sec:esbec}, the CGAD is implemented to find some excited states of single-component BECs. Several interesting mathematical properties of excited states and the detailed numerical results in 1D and 2D are presented. Finally, some conclusions are drawn in section~\ref{sec:conclusion}.

\section{Constrained saddle points and Morse indices}\label{sec:cspmi}

Let $X$ be a real Hilbert space with its inner product $\langle\cdot,\cdot\rangle$ and norm $\|\cdot\|$. An energy functional $E\in C^2(X,\mathbb{R})$ and $m$ constraint functionals $G_i\in C^2(X,\mathbb{R})$, $i=1,2,\ldots,m$ are given. Consider critical points of the energy functional $E$ under constraints
\begin{align}\label{eq:m-constraints}
G_i(u)=0,\quad i=1,2,\ldots,m.
\end{align}
Denote $\mathcal{M}=\{u\in X: G_i(u)=0,i=1,2,\ldots,m\}$ as the constraint manifold. 

\begin{definition}
$u^*\in X$ is called a constrained critical point of $E$ on the manifold $\mathcal{M}$, or a constrained critical point of $E$ under the constraints \eqref{eq:m-constraints}, if there exist $\mu_i^*\in\mathbb{R}$, $i=1,2,\ldots,m$, such that
\begin{equation}\label{eq:csaddle-def}
  E'(u^*)-\sum_{i=1}^m\mu_i^* G_i'(u^*)=0, \qquad G_i(u^*)=0, \quad i=1,2,\ldots,m,
\end{equation}
where $E'$ and $G_i'$ represent the Fr\'{e}chet derivatives (or gradients) of $E$ and $G_i$, respectively. The constrained critical point that is not local extremizer (i.e., maximizer or minimizer) is called a constrained saddle point. 
\end{definition}

Throughout this paper, we assume that the constraints \eqref{eq:m-constraints} are {\em regular}, i.e., their gradients $G_i'(u)$, $i=1,2,\ldots,m$, are linearly independent for all $u\in\mathcal{M}$. Then $\mathcal{M}$ is a $C^2$ differential manifold, and its tangent space at $u\in\mathcal{M}$ is given by $T_u\mathcal{M}=\{v\in X:\langle G_i'(u),v\rangle=0,i=1,2,\ldots,m\}$. A direct computation shows that the orthogonal projection operator from $X$ onto the tangent space $T_u\mathcal{M}$ at $u\in\mathcal{M}$ takes
\begin{align}\label{eq:Pu-def}
P_u=I-\sum_{i=1}^m\sum_{j=1}^m g_{ij}(u)\left[G_i'(u)\otimes G_j'(u)\right],
\end{align}
where $I$ is the identity operator, $g_{ij}(u)$ are the $(i,j)$-elements of the inverse to the (positive definite) Gram matrix $[\langle G_i'(u),G_j'(u)\rangle]_{i,j=1,2,\ldots,m}$, and $\otimes$ denotes the tensor product operator defined as $(v\otimes w)\xi = \langle w,\xi\rangle v$, $\forall v,w,\xi\in X$. The projected gradient of $E$ at $u\in\mathcal{M}$ can be written as
\begin{align}\label{eq:projgrad-def}
F(u):=P_uE'(u)=E'(u)-\sum_{i=1}^m \mu_i(u) G_i'(u),
\end{align}
with $\mu_i(u)=\sum_{j=1}^m g_{ij}(u)\big\langle G_j'(u),E'(u)\big\rangle$, $i=1,2,\ldots,m$. Clearly, $u^*\in X$ is a constrained critical point of $E$ on $\mathcal{M}$ if and only if $G_i(u^*)=0$, $i=1,2,\ldots,m$, and $F(u^*)=0$.

For $u\in\mathcal{M}$, denoting $H(u):=E''(u)-\sum_{i=1}^m \mu_i(u) G_i''(u)$ the effective Hessian operator \cite{Luenberger1969}, we define the projected Hessian operator
\begin{align}\label{eq:Hhat-def}
\hat{H}(u)=P_uH(u)P_u:\quad T_u\mathcal{M}\to T_u\mathcal{M},
\end{align}
which is a self-adjoint linear operator on the tangent space $T_u\mathcal{M}$. Similar to the concept of Morse indices for unconstrained critical points \cite{Chang1993}, the stability/instability of a constrained critical point $u\in\mathcal{M}$ can be depicted by examining the spectrum of the linear operator $\hat{H}(u)$. More precisely, we introduce the following definition.

\begin{definition}
Assume that $ u^*$ is a constrained critical point of $E$ on the manifold $\mathcal{M}$. Let $T_{u^*}\mathcal{M}=T^-\oplus T^0\oplus T^+$, $\dim(T^0)<\infty$, where $T^-$, $T^0$ and $T^+$ are, respectively, the maximum negative, null, and maximum positive subspaces according to the spectral decomposition of the linear operator $\hat{H}(u^*):T_{u^*}\mathcal{M}\to T_{u^*}\mathcal{M}$. The {\em Morse index} of $u^*$ is defined as $\mathrm{index}(u^*)=\dim(T^-)$. $u^*$ is nondegenerate if $T^0=\{0\}$. Otherwise, $u^*$ is degenerate and $\dim(T^0)$ is called its nullity. When $\mathrm{index}(u^*)=k$ ($k=1,2,\ldots$), $u^*$ is called an index-$k$ constrained saddle point, $T^-$ is called its unstable (tangent) subspace and each nonzero vector in $T^-$ is called an unstable (tangent) direction at $u^*$.
\end{definition}

\section{The constrained gentlest ascent dynamics}\label{sec:cgad}


\subsection{Review of the GAD}\label{sec:review}

To propose our CGAD method, we first review the GAD developed in \cite{EZ2011NL} for finding index-1 unconstrained saddle points of $E$, which is formulated as
\begin{equation}\label{eq:EZ-GAD}
\left\{
  \begin{aligned}
    \dot{u} &= -E'(u)+2\langle E'(u),v\rangle v,\\
    \dot{v} &= -E''(u)v+\langle E''(u)v,v\rangle v,
  \end{aligned}\right.
\end{equation}
starting at $(u(0),v(0))=(u_0,v_0)\in X^2$ with $v_0$ satisfying the normalization condition $\|v_0\|=1$. Compared to the steepest descent dynamics or gradient flow
\begin{equation}\label{eq:GF}
\dot{u}=-E'(u),
\end{equation}
which works for finding local minima, the GAD \eqref{eq:EZ-GAD} consists of two equations. The first equation in \eqref{eq:EZ-GAD} can be obtained by performing the Householder transformation for the gradient flow with respect to the auxiliary unit vector $v$, where the last term in it makes $v$ a stable direction. 
The second equation in \eqref{eq:EZ-GAD}, evolving the vector $v$, is constructed by solving the Rayleigh quotient minimization problem $\min_{\|v\|=1}\langle E''(u)v,v\rangle$, which makes $v$ approximate the unstable direction of the target index-1 saddle point. %
It was proved in \cite{EZ2011NL} that, for an appropriately smooth energy function $E$ defined on an Euclidean space, the linearly stable steady state of the GAD \eqref{eq:EZ-GAD} is exactly an index-1 saddle point of $E$.

\subsection{CGAD for index-1 constrained saddle points}

The aim of this section is to propose the CGAD for finding constrained saddle points with any specified indices. To clarify the idea, we first construct the formulation of the CGAD to search for index-1 constrained saddle points.

Let $u\in\mathcal{M}$ be an approximation of an index-1 constrained saddle point of $E$ on the constraint manifold $\mathcal{M}=\{u\in X: G_i(u)=0,i=1,2,\ldots,m\}$, and the unit vector $v\in T_u\mathcal{M}$ be an approximation of the corresponding unstable tangent direction, see Fig.~\ref{fig:cgad1-idea}~(left). We discuss below how to construct the evolution equations of $u$ and $v$.

\begin{figure}[!ht]
\centering
\includegraphics[width=.33\textwidth]{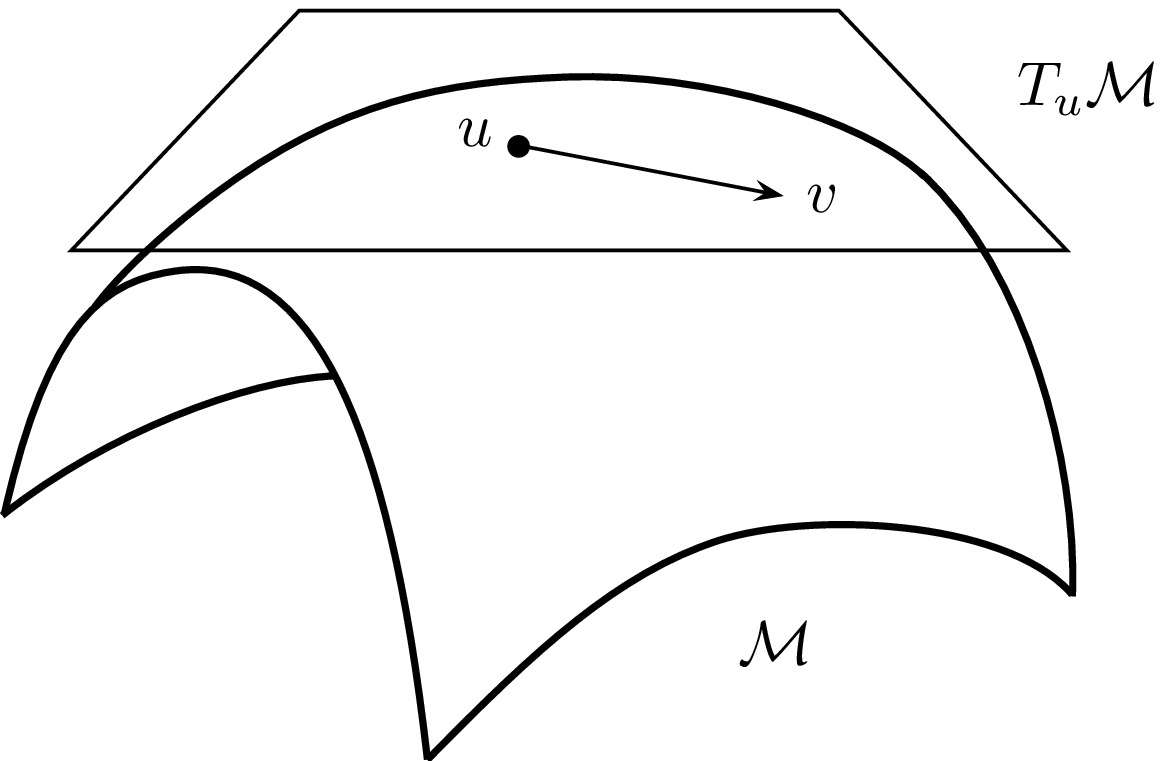} \quad
\includegraphics[width=.35\textwidth]{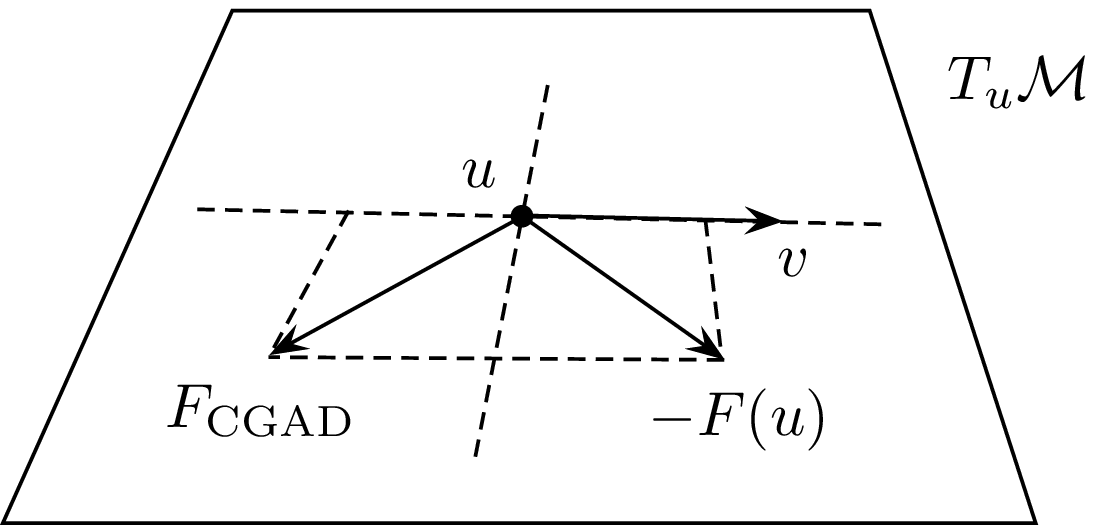}
\caption{Illustration of the index-1 CGAD. Left: $u\in\mathcal{M}$ and $v\in T_u\mathcal{M}$ approximate an index-1 constrained saddle point on the manifold $\mathcal{M}$ and its unit unstable direction, respectively. Right: the projected gradient $F(u)=P_uE'(u)$ has the decomposition $F(u)=F_{v}(u)+F_{\bot}(u)$ with $F_v(u)\in T_u\mathcal{M}\cap\mathrm{span}\{v\}$ and $F_{\bot}(u)\in T_u\mathcal{M}\cap\mathrm{span}\{v\}^\bot$, and thus, the force of the index-1 CGAD to evolve $u$ is constructed as $F_{\mathrm{CGAD}}=F_{v}(u)-F_{\bot}(u)=-F(u)+2F_{v}(u)$.}
\label{fig:cgad1-idea}
\end{figure}

\begin{itemize}
\item {\bfseries Construction of the dynamics for $u$.} 
To guarantee that $u$ moves towards an index-1 constrained saddle point, the evolution of $u$ in the direction $v$ has to increase the energy, while the evolution in other directions decreases the energy. Moreover, to preserve the constraint $u\in\mathcal{M}$ (i.e., $G_i(u)=0$, $i=1,2,\ldots,m$), the force to evolve $u$ must be in the tangent space $T_u\mathcal{M}$. Thus, we construct the dynamics for $u$ as
\begin{equation}\label{eq:cgad-idx1-eq1idea}
  \dot{u}=F_{v}(u)-F_{\bot}(u),
\end{equation}
where $F_{v}(u)=\langle F(u),v\rangle v$ is the component of the projected gradient $F(u)=P_uE'(u)$ in $v$ and $F_{\bot}(u)=F(u)-\langle F(u),v\rangle v$ the component of $F(u)$ in the orthogonal complement of $v$, as illustrated in Fig.~\ref{fig:cgad1-idea}~(right). Intuitively, the first term in \eqref{eq:cgad-idx1-eq1idea} makes the energy increase in $v$ and the second term makes the energy decrease in other directions. 
\item {\bfseries Construction of the dynamics for $v$.} 
From the definition of unstable directions, if $u$ is an index-1 constrained saddle point, its unstable direction $v$ is an eigenvector of the projected Hessian $\hat{H}(u)$ corresponding to the unique negative eigenvalue. By the Rayleigh-Ritz variational principle \cite{Zeidler1985III}, $v$ can be obtained by solving the following minimization problem
\begin{equation}\label{eq:cgad-idx1-eq2idea-min}
   \min_{v\in T_u\mathcal{M},\|v\|^2=1}\langle \hat{H}(u)v, v\rangle.
\end{equation}
Considering the Lagrangian 
\begin{align}
\mathcal{L}(v,\lambda,\bar{\lambda}_1,\bar{\lambda}_2,\ldots,\bar{\lambda}_m)
= \frac12\langle \hat{H}(u)v, v\rangle-\frac{\lambda}{2}(\|v\|^2-1)-\sum_{i=1}^m\bar{\lambda}_i\langle G_i'(u),v\rangle, 
\end{align}
we construct the dynamics for $v$ as
\begin{equation}\label{eq:cgad-idx1-eq2idea}
  \dot{v}=-\frac{\delta}{\delta v}\mathcal{L}(v,\lambda,\bar{\lambda}_1,\bar{\lambda}_2,\ldots,\bar{\lambda}_m)
   = -\hat{H}(u)v+\lambda v+\sum_{i=1}^m\bar{\lambda}_i G_i'(u),
\end{equation}
where $\lambda=\lambda(u,v)$ and $\bar{\lambda}_i=\bar{\lambda}_i(u,v)$ are the Lagrange multipliers corresponding to the constraints $\|v\|^2=1$ and $\langle G_i'(u),v\rangle=0$, $i=1,2,\ldots,m$ (i.e., $v\in T_u\mathcal{M}$), respectively.
\end{itemize}

In summary, the CGAD for finding an index-1 constrained saddle point is formulated as
\begin{equation}\label{eq:cgad-idx1}
\left\{
\begin{aligned}
  \dot{u} &= - F(u) + 2\langle F(u),v\rangle v, \\
  \dot{v} &= - \hat{H}(u)v + \lambda v +\sum_{i=1}^m\bar{\lambda}_iG_i'(u),
\end{aligned}\right.
\end{equation}
with the initial data $(u(0),v(0))=(u_0,v_0)$ satisfying $u_0\in\mathcal{M}$, $v_0\in T_{u_0}\mathcal{M}$ and $\|v_0\|^2=1$. The Lagrange multipliers $\lambda$ and $\bar{\lambda}_i$ ($i=1,2,\ldots,m$) in \eqref{eq:cgad-idx1} are chosen such that the flow preserves the constraints $\|v\|^2=1$ and $\langle G_i'(u),v\rangle=0$ ($i=1,2,\ldots,m$), respectively. Therefore, $\langle v,\dot{v}\rangle=0$ and $\langle G_i'(u),\dot{v}\rangle+\langle G_i''(u)\dot{u},v\rangle=0$, which lead to
\begin{equation*}
  \lambda = \langle\hat{H}(u)v,v\rangle,\quad
  \bar{\lambda}_i = \sum_{j=1}^m g_{ij}(u)\Big\langle G_j''(u)v,\, F(u) - 2\langle F(u),v\rangle v\Big\rangle,\quad
  i=1,2,\ldots,m.
\end{equation*}

\subsection{CGAD for high-index constrained saddle points}

Now, we extend the index-1 CGAD \eqref{eq:cgad-idx1} to general high-index cases. 
To construct the CGAD for finding an index-$k$ ($k=1,2,3,\ldots$) constrained saddle point of the energy functional $E\in C^2(X,\mathbb{R})$ on the constraint manifold $\mathcal{M}$, we need to consider $k$ linearly independent unstable tangent directions $v_1,v_2,\ldots,v_k\in T_u\mathcal{M}$ (see Fig.~\ref{fig:cgadk-idea}). Let $u\in\mathcal{M}$ be an approximation of an index-$k$ constrained saddle point and $V=\spn\{v_1,v_2,\ldots,v_k\}$ the approximation of corresponding unstable subspace. Denote $W$ by the orthogonal complement of $V$ in $T_u\mathcal{M}$. 
\begin{figure}[!ht]
\centering
\includegraphics[width=.4\textwidth]{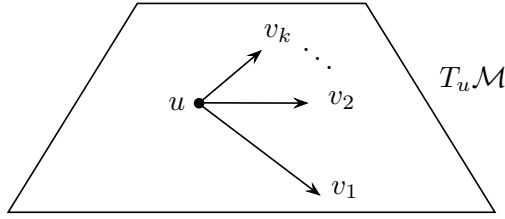}
\caption{Illustration of unstable directions $v_1,v_2,\ldots,v_k\in T_u\mathcal{M}$ for an index-$k$ constrained saddle point $u\in\mathcal{M}$.}
\label{fig:cgadk-idea}
\end{figure}

In order to make $u$ move towards an index-$k$ constrained saddle point, the force that evolves $u$ needs to be in the tangent space $T_u\mathcal{M}$ with its components in $V$ and $W$ increasing and decreasing the energy, respectively. It is natural to evolve $u$ by the steepest ascent dynamics in $V$ and the steepest descent dynamics in $W$, i.e., the dynamics for $u$ is as
\begin{equation}\label{eq:hicgad-eq1idea}
  \dot{u}=F_V(u)-F_W(u),
\end{equation}
where $F_V(u)$ and $F_W(u)=F(u)-F_V(u)$ are the orthogonal projections of the projected gradient $F(u)=P_uE'(u)$ on $V$ and $W$, respectively. If $v_1,v_2,\ldots,v_k\in T_u\mathcal{M}$ satisfy the orthonormal conditions: $\langle v_i,v_j\rangle=\delta_{ij}$, then $F_V(u)=\sum_{j=1}^k\langle F(u),v_j\rangle v_j$, and \eqref{eq:hicgad-eq1idea} becomes
\begin{equation}\label{eq:hicgad-eq1idea1}
  \dot{u}=-F(u)+2F_V(u)
  =-F(u)+2\sum_{j=1}^k\langle F(u),v_j\rangle v_j.
\end{equation}

It is worthwhile to point out that, if $u$ is an index-$k$ constrained saddle point, its unstable directions $v_1,v_2,\ldots,v_k$ can be taken as the orthonormal eigenvectors of the projected Hessian $\hat{H}(u)$ corresponding to the $k$ smallest and negative eigenvalues. By the Rayleigh-Ritz variational principle \cite{Zeidler1985III}, the eigenvector $v_1$ corresponding to the smallest eigenvalue can be obtained by minimizing $\langle\hat{H}(u)v_1,v_1\rangle$ under the constraints $v_1\in T_u\mathcal{M}$ and $\|v_1\|^2=1$. And, when eigenvectors $v_1,v_2,\ldots,v_{i-1}$ of $\hat{H}(u)$ corresponding to the first $i-1$ smallest eigenvalues are known, the eigenvector $v_i$ corresponding to the $i$-th smallest eigenvalue can be obtained by solving the following Rayleigh-Ritz minimization problem \cite{Zeidler1985III}
\begin{equation}\label{eq:hicgad-eq2ideaminvi}
  \min_{v_i} \langle\hat{H}(u)v_i,v_i\rangle\quad \mbox{s.t.}\quad v_i\in T_u\mathcal{M},\quad \langle v_i,v_j\rangle=\delta_{ij},\quad j=1,2,\ldots,i.
\end{equation}
Consider the Lagrangian
\begin{align*}
  & \mathcal{L}_i(v_i,\lambda_{i1},\lambda_{i2},\ldots,\lambda_{ii},\bar{\lambda}_{i1},\bar{\lambda}_{i2},\ldots,\bar{\lambda}_{im}) \\
  &\qquad= \frac12 \langle\hat{H}(u)v_i,v_i\rangle-\frac{\lambda_{ii}}{2}\left(\|v_i\|^2-1\right) - \sum_{j=1}^{i-1}\lambda_{ij}\langle v_i,v_j\rangle -\sum_{l=1}^m\bar{\lambda}_{il} \langle G_l'(u),v_i\rangle,
\end{align*}
with Lagrange multipliers $\lambda_{ij}$ ($j=1,2,\ldots,i$) and $\bar{\lambda}_{il}$ ($l=1,2,\ldots,m$) corresponding to constraints $\langle v_i,v_j\rangle=\delta_{ij}$ and $\langle G_l'(u),v_i\rangle=0$ (i.e., $v_i\in T_u\mathcal{M}$), respectively. The gradient flow for solving \eqref{eq:hicgad-eq2ideaminvi} is given by the following dynamics for $v_i$ ($i=1,2,\ldots,k$):
\begin{equation*}
  \dot{v}_i = -\frac{\delta \mathcal{L}_i}{\delta v_i}
  = -\hat{H}(u) v_i +\sum_{j=1}^i \lambda_{ij}v_j + \sum_{l=1}^m\bar{\lambda}_{il} G_l'(u).
\end{equation*}

Based on the above discussion, we propose the following CGAD to search for the index-$k$ constrained saddle point:
\begin{align}\label{eq:cgad-idxk}
\left\{
\begin{aligned}
\gamma_0\dot{u} &= -F(u)+2\sum_{i=1}^k\langle F(u),v_i\rangle v_i, \\
\gamma_i\dot{v}_i &= -\hat{H}(u)v_i+\sum_{j=1}^i\lambda_{ij}v_j+\sum_{l=1}^m\bar{\lambda}_{il}G_l'(u),\quad i=1,2,\ldots,k.
\end{aligned}\right.
\end{align}
Here $\gamma_i>0$ ($i=0,1,\ldots,k$) are relaxation parameters. The Lagrange multipliers $\lambda_{ij}$ ($j=1,2,\ldots,i$) and $\bar{\lambda}_{il}$ ($l=1,2,\ldots,m$) are chosen such that the flow preserves the constraints $\langle v_i,v_j\rangle=\delta_{ij}$ and $\langle G_l'(u),v_i\rangle=0$, respectively, which leads to $\langle \dot{v}_i,v_j\rangle+\langle v_i,\dot{v}_j\rangle=0$ and $\langle G_l''(u)\dot{u},v_i\rangle+\langle G_l'(u),\dot{v}_i\rangle=0$. Hence
\begin{align}
\lambda_{ij} &=\left(1+\gamma_i/\gamma_j-\delta_{ij}\right)\langle \hat{H}(u) v_i,v_j\rangle,\quad j=1,2,\ldots,i, \label{eq:cgad-idxk-lambdaij} \\
\bar{\lambda}_{il} &= \frac{\gamma_i}{\gamma_0}\sum_{l'=1}^m g_{ll'}(u)\bigg\langle G_{l'}''(u)v_i,\, F(u) - 2\sum_{j=1}^k\langle F(u),v_j\rangle v_j\bigg\rangle,\quad l=1,2,\ldots,m, \label{eq:cgad-idxk-lambdaibar}
\end{align}
for $i=1,2,\ldots,k$. The initial data $(u(0),v_1(0),\ldots,v_k(0))$ of \eqref{eq:cgad-idxk} is assumed to satisfy $u(0)\in\mathcal{M}$, $v_i(0)\in T_{u(0)}\mathcal{M}$ and $\langle v_i(0),v_j(0)\rangle=\delta_{ij}$ for $1\leq j\leq i\leq k$, or equivalently,
\begin{subequations}\label{eq:cgad-idxk-init}
\begin{align}
  G_l(u(0)) &= 0, \quad l=1,2,\ldots,m, \label{eq:cgad-idxk-init1}\\
  \langle G_l'(u(0)),v_i(0)\rangle &= 0, \quad l=1,2,\ldots,m,\; i=1,2,\ldots,k, \label{eq:cgad-idxk-init2}\\
  \langle v_i(0),v_j(0)\rangle &= \delta_{ij},\; 1\leq j\leq i\leq k. \label{eq:cgad-idxk-init3}
\end{align}
\end{subequations}
Clearly, the index-1 CGAD \eqref{eq:cgad-idx1} is a special case of the CGAD \eqref{eq:cgad-idxk}.

\begin{remark}
According to the CGAD \eqref{eq:cgad-idxk}, the CSDD proposed in \cite{ZD2012JCP} for finding index-1 constrained saddle points can be easily extended to a high-index CSDD for searching for index-$k$ constrained saddle points. Actually, the approximation
\[ \hat{H}(u)v_i\approx\frac{F(u+\ell v_i)-F(u-\ell v_i)}{2\ell},\quad i=1,2,\ldots,k, \]
and an additional dynamics for shrinking the parameter $\ell>0$, e.g., $\dot{\ell}=-\ell$ \cite{ZD2012JCP,ZD2012SINUM}, should be implemented to construct the index-$k$ CSDD from the CGAD \eqref{eq:cgad-idxk}.
\end{remark}

The following lemma states that the CGAD \eqref{eq:cgad-idxk} preserves exactly the constraints \eqref{eq:cgad-idxk-init}. 
\begin{lemma}\label{lem:cgad-idxk-conpty}
Assume that $E,G_l\in C^2$, $l=1,2,\ldots,m$, and the constraints \eqref{eq:m-constraints} are regular. Let $(u(t),v_1(t),\ldots,v_k(t))$ be the solution of \eqref{eq:cgad-idxk} with the initial data satisfying \eqref{eq:cgad-idxk-init}. Then
\begin{subequations}\label{eq:cgad-idxk-conpty}
\begin{align}
  G_l(u(t)) &\equiv 0, \quad l=1,2,\ldots,m, \label{eq:cgad-idxk-conpty1}\\
  \langle G_l'(u(t)),v_i(t)\rangle &\equiv 0, \quad l=1,2,\ldots,m,\; i=1,2,\ldots,k, \label{eq:cgad-idxk-conpty2}\\
  \langle v_i(t),v_j(t)\rangle &\equiv \delta_{ij},\; 1\leq j\leq i\leq k. \label{eq:cgad-idxk-conpty3}
\end{align}
\end{subequations}
\end{lemma}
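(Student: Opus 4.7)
Plan. I would prove \eqref{eq:cgad-idxk-conpty} by recasting the three identities as the claim that a single vector-valued quantity vanishes identically. Introduce
\begin{equation*}
\Phi_l(t) := G_l(u(t)), \quad \Psi_{li}(t) := \langle G_l'(u(t)), v_i(t)\rangle, \quad \Omega_{ij}(t) := \langle v_i(t), v_j(t)\rangle - \delta_{ij},
\end{equation*}
all of which vanish at $t=0$ by \eqref{eq:cgad-idxk-init}. The strategy is to show that $X(t) := (\Phi, \Psi, \Omega)(t)$ satisfies a homogeneous linear ODE with continuous coefficients, so that uniqueness forces $X \equiv 0$. Local existence and uniqueness of \eqref{eq:cgad-idxk} itself follows from the $C^2$-regularity of $E$ and $G_l$ together with the assumed regularity of the constraints, which keeps the Gram matrix $A_{ll'}(u) := \langle G_l'(u), G_{l'}'(u)\rangle$ invertible on a neighborhood of $u(0)$.

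The evolution of $\Phi_l$ uses only that $F(u) = P_u E'(u)$ lies in the range of $P_u$, which by construction \eqref{eq:Pu-def} is orthogonal to each $G_l'(u)$. Hence $\gamma_0 \dot\Phi_l = 2\sum_i \langle F(u), v_i\rangle \Psi_{li}$, linear in $\Psi$. For $\dot\Psi_{li}$ I would differentiate, substitute both equations of \eqref{eq:cgad-idxk}, and use $\langle G_l'(u), \hat H(u) v_i\rangle = 0$ (because $\hat H = P_u H P_u$ maps into the tangent space). The crucial cancellation is then engineered by formula \eqref{eq:cgad-idxk-lambdaibar}: using the inverse-Gram identity $\sum_{l'} A_{ll'}(u) g_{l'l''}(u) = \delta_{ll''}$ and the first equation of \eqref{eq:cgad-idxk} in the form $F(u) - 2\sum_j\langle F(u), v_j\rangle v_j = -\gamma_0 \dot u$, the sum $\sum_{l'} \bar\lambda_{il'} \langle G_l'(u), G_{l'}'(u)\rangle$ collapses to $-\gamma_i \langle G_l''(u) v_i, \dot u\rangle$, which exactly kills $\gamma_i \langle G_l''(u) \dot u, v_i\rangle$ by symmetry of the bilinear form $G_l''(u)$. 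What remains is $\gamma_i \dot\Psi_{li} = \sum_j \lambda_{ij} \Psi_{lj}$, again linear in $\Psi$.

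For $\dot\Omega_{ij}$ I would treat $j < i$ and $j = i$ separately, since $\lambda_{ij}$ in \eqref{eq:cgad-idxk} is only defined for $j \leq i$. In both cases, after substituting \eqref{eq:cgad-idxk}, the $\hat H$-contributions combine to $-\bigl(\tfrac1{\gamma_i} + \tfrac1{\gamma_j}\bigr)\langle \hat H v_i, v_j\rangle$ by self-adjointness of $\hat H(u)$; and formula \eqref{eq:cgad-idxk-lambdaij} --- precisely $\lambda_{ij} = (1 + \gamma_i/\gamma_j - \delta_{ij})\langle \hat H v_i, v_j\rangle$ --- is exactly what is needed to cancel this against the $\lambda_{ij}/\gamma_i$ contribution. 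The remaining terms are linear combinations of other $\Omega_{i'j'}$ and $\Psi_{lj'}$. Stacking the three evolution equations gives $\dot X = A(t) X$ with $A$ continuous and $X(0) = 0$, whence $X \equiv 0$ on the existence interval.

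The main obstacle is the bookkeeping around the asymmetric index range $j \leq i$ in \eqref{eq:cgad-idxk} and \eqref{eq:cgad-idxk-lambdaij}: verifying that the specific coefficient $1 + \gamma_i/\gamma_j - \delta_{ij}$ in $\lambda_{ij}$ and the prefactor $\gamma_i/\gamma_0$ in $\bar\lambda_{il}$ are exactly right --- rather than off by some combinatorial constant --- is the most error-prone step. Once these cancellations are confirmed, the reduction to a linear ODE and the application of uniqueness are standard.
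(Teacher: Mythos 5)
Your proposal is correct and follows essentially the same route as the paper's proof: differentiate $G_l(u)$, $\langle G_l'(u),v_i\rangle$ and $\langle v_i,v_j\rangle-\delta_{ij}$, use $\langle G_l'(u),F(u)\rangle=0$, $\hat H(u)v_i\in T_u\mathcal{M}$, the inverse-Gram identity behind \eqref{eq:cgad-idxk-lambdaibar} and the coefficient in \eqref{eq:cgad-idxk-lambdaij} to obtain a homogeneous linear ODE system with zero initial data, hence identically zero. The only cosmetic difference is that you stack all three families of quantities into one linear system, whereas the paper argues sequentially (first $\langle G_l'(u),v_i\rangle\equiv0$, then $G_l(u)$, then the orthonormality relations), which is the same cancellation bookkeeping in a different order.
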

\begin{proof}
See detailed proof in \ref{sec:pf-cgad-idxk-conpty}.

\end{proof}

\section{Linear stability and local convergence}\label{sec:stab}

In this section, we study the stability and convergence of the CGAD \eqref{eq:cgad-idxk}. The following lemma will play important role in the subsequent analysis, with its proof detailed in \ref{sec:pf-dmudF}.

\begin{lemma}\label{lem:dmudF}
Assume that $E,G_i\in C^2$, $i=1,2,\ldots,m$, and the constraints \eqref{eq:m-constraints} are regular. Then, for $u\in \mathcal{M}$,
\begin{align}
F'(u)v &=\hat{H}(u)v-\sum_{i=1}^m\sum_{j=1}^m g_{ij}(u) \big\langle G_j''(u)F(u),v\big\rangle G_i'(u),\quad \forall v\in T_u\mathcal{M}.
\end{align}
In particular, $F'(u^*)=\hat{H}(u^*)$ if $u^*\in \mathcal{M}$ is a constrained critical point.
\end{lemma}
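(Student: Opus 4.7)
I would work directly from the explicit formula $F(u)=E'(u)-\sum_i\mu_i(u)G_i'(u)$ in \eqref{eq:projgrad-def}. For $v\in T_u\mathcal{M}$, the product rule gives
$$F'(u)v \;=\; E''(u)v - \sum_{i=1}^m \mu_i(u) G_i''(u)v - \sum_{i=1}^m \langle \mu_i'(u),v\rangle G_i'(u) \;=\; H(u)v - \sum_{i=1}^m \langle \mu_i'(u),v\rangle G_i'(u),$$
where the first two terms are collected using the definition $H(u)=E''(u)-\sum_i \mu_i(u)G_i''(u)$. The task then reduces to computing $\langle \mu_i'(u),v\rangle$ and matching the resulting expression against the projector formula for $\hat{H}(u)$.

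Rather than differentiating the entries $g_{ij}(u)$ of the inverse Gram matrix directly, I would characterize $\mu_i(u)$ implicitly by pairing $E'(u)=F(u)+\sum_i\mu_i(u)G_i'(u)$ with each $G_j'(u)$ and using $F(u)\in T_u\mathcal{M}$, which yields the linear system $\sum_i \mu_i(u)\langle G_i'(u),G_j'(u)\rangle=\langle E'(u),G_j'(u)\rangle$. Differentiating this identity in the direction $v$, invoking the symmetry $\langle G_i''(u)v,w\rangle=\langle G_i''(u)w,v\rangle$ of each Hessian, and regrouping so that the terms weighted by $\mu_i$ fuse with the $E''$ term into $H(u)v$ while the residual $E'(u)-\sum_i\mu_i(u)G_i'(u)$ is identified as $F(u)$, produces
$$\sum_i \langle \mu_i'(u),v\rangle \langle G_i'(u),G_j'(u)\rangle \;=\; \langle G_j'(u),H(u)v\rangle + \langle G_j''(u)F(u),v\rangle,$$
and inverting the Gram matrix gives $\langle \mu_k'(u),v\rangle=\sum_j g_{kj}(u)\bigl[\langle G_j'(u),H(u)v\rangle+\langle G_j''(u)F(u),v\rangle\bigr]$.

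Substituting this back into $F'(u)v=H(u)v-\sum_i\langle \mu_i'(u),v\rangle G_i'(u)$ and splitting into two sums, the first sum equals $\sum_{i,j}g_{ij}(u)\langle G_j'(u),H(u)v\rangle G_i'(u)=(I-P_u)H(u)v$ by the definition \eqref{eq:Pu-def} of $P_u$, so $H(u)v$ minus it collapses to $P_uH(u)v=P_uH(u)P_uv=\hat{H}(u)v$ (since $P_uv=v$ for $v\in T_u\mathcal{M}$). The second sum reproduces exactly the correction term $-\sum_{i,j}g_{ij}(u)\langle G_j''(u)F(u),v\rangle G_i'(u)$ in the lemma. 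At a constrained critical point $u^*$, $F(u^*)=0$ kills the correction and leaves $F'(u^*)v=\hat{H}(u^*)v$ on $T_{u^*}\mathcal{M}$. The main obstacle is the careful bookkeeping in the differentiation of the implicit characterization of $\mu_i(u)$: the six-term expansion of the right-hand side must be reorganized using both the symmetry of $G_i''(u)$ and the cancellation $E'(u)-\sum_i\mu_i(u)G_i'(u)=F(u)$ so that the answer packages cleanly as $H(u)v$ plus an $F(u)$-correction, rather than as an opaque combination of $E''$, $G_i''$ and Gram-matrix derivatives.
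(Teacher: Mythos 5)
Your proposal is correct and follows essentially the same route as the paper's proof: differentiate the Gram-matrix system $\sum_i\mu_i(u)\langle G_i'(u),G_j'(u)\rangle=\langle E'(u),G_j'(u)\rangle$ characterizing the multipliers, invert the Gram matrix to get $\mu_i'(u)$ (your paired form $\langle\mu_k'(u),v\rangle=\sum_j g_{kj}(u)[\langle G_j'(u),H(u)v\rangle+\langle G_j''(u)F(u),v\rangle]$ is exactly the paper's formula after using self-adjointness of $H$), and substitute back into $F'(u)v=H(u)v-\sum_i\langle\mu_i'(u),v\rangle G_i'(u)$, recognizing the $H$-part as $(I-P_u)H(u)v$ so that $P_uH(u)P_uv=\hat H(u)v$ emerges and $F(u^*)=0$ kills the correction at a critical point. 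The only difference is presentational (scalar pairings with $v$ versus the paper's gradient identity for $\mu_i'(u)$), not mathematical.
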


\subsection{Linear stability of the CGAD}

We now show that the linearly stable steady state of the CGAD \eqref{eq:cgad-idxk} is exactly a nondegenerate index-$k$ constrained saddle point of $E$ on the manifold $\mathcal{M}$. Similar results of the GAD for unconstrained saddle points and the CSDD for index-1 constrained saddle point can be found in \cite{EZ2011NL,YZZ2019SISC} and \cite{ZD2012JCP}, respectively.

\begin{theorem}\label{thm:cgad-idxk-linstab}
Assume that $E,G_l\in C^3$, $l=1,2,\ldots,m$, and the constraints \eqref{eq:m-constraints} are regular. Let $u^*\in\mathcal{M}$ and $v_i^*\in T_{u^*}\mathcal{M}$, $i=1,2,\ldots,k$, satisfy $\langle v_i^*,v_j^*\rangle=\delta_{ij}$.
\begin{enumerate}[(a)]
  \item $(u^*,v_1^*,\ldots,v_k^*)$ is a steady state of \eqref{eq:cgad-idxk} if and only if $u^*$ is a constrained critical point of $E$ on the manifold $\mathcal{M}$ and $v_i^*$, $i=1,2,\ldots,k$, are eigenvectors of $\hat{H}(u^*)$.
  \item $(u^*,v_1^*,\ldots,v_k^*)$ is a linearly stable steady state of \eqref{eq:cgad-idxk} if and only if the following hold:
  \begin{enumerate}[(i)]
    \item $u^*$ is a nondegenerate index-$k$ constrained saddle point of $E$ on the manifold $\mathcal{M}$; 
    \item all the eigenvalues of $\hat{H}(u^*)$, say $\{\lambda_i^*\}$, satisfy $\lambda_1^*<\lambda_2^*<\cdots<\lambda_k^*<0<\lambda_{k+1}^*\leq\lambda_{k+2}^*\leq\cdots$; 
    \item for $i=1,2,\ldots,k$, $v_i^*$ is the eigenvector of $\hat{H}(u^*)$ corresponding to the eigenvalue $\lambda_i^*$.
  \end{enumerate}
\end{enumerate}
\end{theorem}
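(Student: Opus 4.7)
The plan is to handle parts (a) and (b) separately: part (a) is algebraic, while part (b) requires a linearization that respects the constraint submanifold defined by \eqref{eq:cgad-idxk-conpty}. For part (a), I set every right-hand side in \eqref{eq:cgad-idxk} to zero at $(u^*,v_1^*,\ldots,v_k^*)$. Taking the inner product of the $u$-equation with each $v_j^*$ and using $v_j^*\in T_{u^*}\mathcal M$ forces $\langle F(u^*),v_j^*\rangle=0$ for every $j$, so the first equation collapses to $F(u^*)=0$; thus $u^*$ is a constrained critical point. The explicit formula \eqref{eq:cgad-idxk-lambdaibar} then gives $\bar\lambda_{il}^*=0$, and the $v_i$-equation reduces to $\hat H(u^*)v_i^*=\sum_{j=1}^i\lambda_{ij}^*v_j^*$. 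Combining this with \eqref{eq:cgad-idxk-lambdaij} and taking the inner product with $v_j^*$ for $j<i$ forces $\lambda_{ij}^*=0$ whenever $j<i$, so each $v_i^*$ is an eigenvector of $\hat H(u^*)$ with eigenvalue $\lambda_i^*:=\lambda_{ii}^*$. The converse direction is immediate.

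For part (b), I linearize \eqref{eq:cgad-idxk} at the steady state inside the submanifold \eqref{eq:cgad-idxk-conpty}. By Lemma \ref{lem:dmudF}, $F'(u^*)=\hat H(u^*)$ on tangent vectors, and at the steady state $F(u^*)=0$, $\bar\lambda_{il}^*=0$, $\lambda_{ij}^*=\lambda_i^*\delta_{ij}$. Let $\{e_j\}_{j\ge 1}$ be a complete orthonormal eigenbasis of $\hat H(u^*)$ on $T_{u^*}\mathcal M$ with $e_j=v_j^*$ for $j\le k$. Expanding $\delta u=\sum_j c_je_j\in T_{u^*}\mathcal M$ and using $\langle\hat H(u^*)\delta u,v_i^*\rangle=\lambda_i^*c_i$ in the linearized $u$-equation yields the decoupled system
\begin{equation*}
\gamma_0\dot c_i=\lambda_i^*c_i\quad(i\le k),\qquad \gamma_0\dot c_j=-\lambda_j^*c_j\quad(j>k),
\end{equation*}
whose stability demands $\lambda_i^*<0$ for $i\le k$ and $\lambda_j^*>0$ for $j>k$; this simultaneously yields the nondegeneracy of $u^*$ and $\mathrm{index}(u^*)=k$, and also fixes the identification of $v_i^*$ as an eigenvector for the $i$-th smallest eigenvalue.

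For the $v_i$-equations I decompose $\delta v_i$ into (i) components along the normals $G_l'(u^*)$, which are slaved to $\delta u$ by the linearized constraint $\langle G_l'(u^*),\delta v_i\rangle+\langle G_l''(u^*)\delta u,v_i^*\rangle=0$; (ii) on-diagonal coefficients $a_{ij}=\langle\delta v_i,v_j^*\rangle$ for $j\le k$, $j\ne i$, subject to $a_{ii}=0$ and $a_{ij}+a_{ji}=0$; and (iii) the remainder $\delta v_i^W\in W:=T_{u^*}\mathcal M\cap\spn\{v_1^*,\ldots,v_k^*\}^\perp$. Taking the $v_j^*$-inner product of the linearized $v_i$-equation for $j>i$, $j\le k$, produces $\gamma_i\dot a_{ij}=(\lambda_i^*-\lambda_j^*)a_{ij}$ (modulo forcing by $\delta u$), whose stability requires the strict separation $\lambda_1^*<\cdots<\lambda_k^*$. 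Projecting onto $W$ gives $\gamma_i\delta\dot v_i^W=-[\hat H(u^*)-\lambda_i^*I]\big|_W\delta v_i^W$ (modulo forcing), whose eigenvalues $-(\lambda_j^*-\lambda_i^*)/\gamma_i$ for $j>k$ are automatically negative under the conditions already established. Putting the three blocks together yields (i)--(iii); the converse is routine.

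The main obstacle is the bookkeeping for the Lagrange multipliers, and in particular the asymmetry of \eqref{eq:cgad-idxk}: the multiplier $\lambda_{ij}$ exists only for $j\le i$, so the two equations produced by the $v_i$- and $v_j$-inner products treat the perturbations $a_{ij}$ and $a_{ji}$ on unequal footing. The resolution is to combine them via $a_{ij}+a_{ji}=0$, which both determines $\delta\lambda_{ji}$ and reduces the pair to the single scalar equation above; strict separation of the $k$ negative eigenvalues emerges precisely as the stability condition for this reduced equation. Working on the constraint submanifold from the start, with Lemma \ref{lem:cgad-idxk-conpty} guaranteeing invariance, sidesteps the need to propagate $\delta\bar\lambda_{il}$ explicitly and restricts the Jacobian to the directions that genuinely matter for stability.
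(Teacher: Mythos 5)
Your argument is correct, and part (a) coincides with the paper's proof step for step. For part (b) the underlying idea is the same (linearize, invoke Lemma~\ref{lem:dmudF} so that $F'(u^*)=\hat H(u^*)$, exploit the lower-triangular coupling structure, and read off modal growth rates), but you carry it out on a genuinely different perturbation space: you restrict from the outset to perturbations tangent to the invariant constraint set \eqref{eq:cgad-idxk-conpty}, slaving the normal components of $\delta v_i$ to $\delta u$ and keeping only the antisymmetric coefficients $a_{ij}=-a_{ji}$, whereas the paper computes the Jacobian $J$ of the full right-hand side on $(T_{u^*}\mathcal M)^{k+1}$ without imposing the orthonormality constraints on $\delta v_i$. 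The paper's route is cheaper in bookkeeping (no multiplier variations need to be tracked; the blocks $J_{00},J_{ii}$ are diagonalized directly in the eigenbasis of $\hat H(u^*)$) but produces extra modes with rates $\lambda_i^*/\gamma_i+\lambda_r^*/\gamma_r$, $r\le i\le k$, which your restricted analysis never sees; these are automatically negative under (ii), so the two notions of linear stability coincide and your equivalence is the same as the paper's. Two small caveats: if ``linearly stable'' is read as the paper computes it (spectrum of $J$ on $(T_{u^*}\mathcal M)^{k+1}$), your sufficiency direction should at least remark that the discarded orthonormality-violating modes are stable under (i)--(iii), since ``the converse is routine'' as written only covers the modes inside the submanifold; and the statement that $a_{ij}+a_{ji}=0$ ``determines $\delta\lambda_{ji}$'' is slightly misleading -- the multipliers are given by the explicit formula \eqref{eq:cgad-idxk-lambdaij}, and their variations are merely consistent with constraint preservation (that is what Lemma~\ref{lem:cgad-idxk-conpty} encodes), which is exactly why your two projections ($v_i$-equation onto $v_j^*$ and $v_j$-equation onto $v_i^*$) yield the same reduced scalar equation with rate $(\lambda_i^*-\lambda_j^*)/\gamma_i$ for $i<j\le k$, forcing the strict separation $\lambda_1^*<\cdots<\lambda_k^*$ just as the paper's eigenvalues $(\lambda_i^*-\lambda_s^*)/\gamma_i$ do.
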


\begin{proof}
(a) {\em Necessity.} Suppose that $(u^*,v_1^*,\ldots,v_k^*)$ is a steady state of \eqref{eq:cgad-idxk}, i.e.,
\begin{align}
-F(u^*) + 2\sum_{i=1}^k\langle F(u^*),v_i^*\rangle v_i^* &=0, \label{eq:thm-idxk-a:n1} \\
  -\hat{H}(u^*) v_i^* +\sum_{j=1}^i \lambda_{ij}^*v_j^* + \sum_{l=1}^m\bar{\lambda}_{il}^* G_l'(u^*) &= 0, \quad i=1,2,\cdots, k, 
  \label{eq:thm-idxk-a:n2}
\end{align}
where $\lambda_{ij}^*$ and $\bar{\lambda}_{il}^*$ are given in \eqref{eq:cgad-idxk-lambdaij}-\eqref{eq:cgad-idxk-lambdaibar} with $(u,v_1,\ldots,v_k)$ replaced by $(u^*,v_1^*,\ldots,v_k^*)$. Noting that $\langle v_i^*,v_j^*\rangle=\delta_{ij}$, taking the inner product in both sides of \eqref{eq:thm-idxk-a:n1} with $v_j^*$ yields $\langle F(u^*),v_j^*\rangle=0$, $j=1,2,\ldots,k$. Therefore, $F(u^*)=0$, i.e., $u^*$ is a constrained critical point. Moreover, by \eqref{eq:cgad-idxk-lambdaibar}, we have $\bar{\lambda}_{il}^*=0$, thus \eqref{eq:thm-idxk-a:n2} becomes
\begin{equation}\label{eq:thm-idxk-a:n2eq}
  \hat{H}(u^*) v_i^* = \sum_{j=1}^i \lambda_{ij}^*v_j^*, \quad i=1,2,\cdots, k. 
\end{equation}
Taking the inner product in both side of \eqref{eq:thm-idxk-a:n2eq} with $v_l^*$ ($\forall l<i$) implies that $\langle \hat{H}(u^*)v_i^*,v_l^*\rangle=\sum_{j=1}^i \lambda_{ij}^*\langle v_j^*,v_l^*\rangle=\lambda_{il}^*$, which states $\lambda_{il}^*=0$ (otherwise, one gets a contradiction with \eqref{eq:cgad-idxk-lambdaij}). Consequently, $\hat{H}(u^*)v_i^*=\lambda_{ii}^*v_i^*$, $i=1,2,\cdots,k$, i.e., $\{(\lambda_{ii}^*,v_i^*)\}_{i=1}^k$ are eigenpairs of $\hat{H}(u^*)$.

{\em Sufficiency.} Suppose that $u^*$ is a constrained critical point of $E$ on the manifold $\mathcal{M}$ and $v_i^*$ ($i=1,2,\ldots,k$) are eigenvectors of $\hat{H}(u^*)$. Then $F(u^*)=0$, \eqref{eq:thm-idxk-a:n1} is satisfied, and $\bar{\lambda}_{il}^*=0$. On the other hand, for each $i=1,2,\ldots,k$, since $v_i^*$ is an eigenvector of $\hat{H}(u^*)$, there exists an $\omega_i^*\in\mathbb{R}$ such that $\hat{H}(u^*) v_i^* = \omega_i^* v_i^*$. We have
\begin{align*}
\lambda_{ij}^*
=(1+\gamma_i/\gamma_j-\delta_{ij})\langle\hat{H}(u^*) v_i^*, v_j^*\rangle 
=(1+\gamma_i/\gamma_j-\delta_{ij})\,\omega_{i}^*\delta_{ij}
=\omega_{i}^*\delta_{ij}.
\end{align*}
 Thus, \eqref{eq:thm-idxk-a:n2} holds. Consequently, $(u^*,v_1^*,\ldots,v_k^*)$ is a steady state of \eqref{eq:cgad-idxk}.

(b) Consider the Jacobian operator of the right-hand-side of \eqref{eq:cgad-idxk}, denoted by $J$. Direct computations and the application of Lemma~\ref{lem:dmudF} show that
\begin{align*}
J=\begin{pmatrix}
     J_{00} & J_{01} & J_{02} & \cdots & J_{0k} \\
     J_{10} & J_{11} & 0 & \cdots & 0 \\
     J_{20} & J_{21} & J_{22} & \cdots & 0 \\
     \vdots & \vdots & \vdots & \ddots & \vdots \\
     J_{k0} & J_{k1} & J_{k2} & \cdots & J_{kk} \\
    \end{pmatrix},
\end{align*}
where
\begin{align*}
J_{00} = \frac{\delta \dot{u}}{\delta u}
 &= -\frac{1}{\gamma_0}\bigg(I-2\sum_{j=1}^kv_j\otimes v_j\bigg) \bigg(\hat{H}(u)-\sum_{l=1}^m\sum_{l'=1}^m g_{ll'}(u)\left[G_{l}'(u)\otimes F(u)\right]G_{l'}''(u)\bigg),\\
J_{0i} = \frac{\delta \dot{u}}{\delta v_i}
 &= \frac{2}{\gamma_0}\Big(\langle F(u),v_i\rangle I +v_i\otimes F(u)\Big), \\
J_{ii} = \frac{\delta \dot{v}_i}{\delta v_i}
 &= \frac{1}{\gamma_i}\bigg(-\hat{H}(u)+\lambda_{ii}I+\sum_{j=1}^i\left(1+\gamma_i/\gamma_j\right)(v_j\otimes v_j)\hat{H}(u)+\sum_{l=1}^m G_l'(u)\otimes\frac{\delta\bar{\lambda}_{il}}{\delta v_i}\bigg),
\end{align*}
for $i=1,2,\ldots,k$, with
\begin{align*}
\frac{\delta\bar{\lambda}_{il}}{\delta v_i} = \frac{\gamma_i}{\gamma_0}\sum_{l'=1}^m g_{ll'}(u) \bigg[ G_{l'}''(u)\Big(I-2\sum_{j=1}^kv_j\otimes v_j\Big)F(u)-2\Big(\!\langle F(u),v_i\rangle I +v_i\otimes F(u)\Big)G_{l'}''(u)v_i\bigg].
\end{align*}
Since at the steady state $(u^*,v_1^*,\ldots,v_k^*)$, $F(u^*)=0$ and $\hat{H}(u^*)v_i^*=\lambda_{i}^*v_i^*$ with $\lambda_{i}^*:=\lambda_{ii}^* =\langle \hat{H}(u^*) v_i^*,v_i^*\rangle$, $i=1,2,\ldots, k$, the Jacobian $J$ at $(u^*,v_1^*,\ldots,v_k^*)$ is a linear operator from $(T_{u^*}\mathcal{M})^{k+1}$ to $(T_{u^*}\mathcal{M})^{k+1}$ and takes a block lower triangular form with diagonal blocks
\begin{align*}
J_{00} &= \frac{1}{\gamma_0}\bigg(2\sum_{j=1}^k\lambda_{j}^*( v_j^*\otimes v_j^*)-\hat{H}(u^*)\bigg),\\
J_{ii} &= \frac{\lambda_{i}^*I-\hat{H}(u^*)}{\gamma_i} + \sum_{j=1}^i\bigg(\frac{1}{\gamma_i}+\frac{1}{\gamma_j}\bigg)\lambda_{j}^*( v_j^*\otimes v_j^*),\quad i=1,2,\ldots, k.
\end{align*}
Moreover, since $\{(\lambda_{i}^*,v_i^*)\}_{i=1}^k$ are eigenpairs of $\hat{H}(u^*)$, the diagonal blocks $J_{00}, J_{11}, \ldots, J_{kk}$ and $\hat{H}(u^*)$ share the same eigenvectors $v_i^* \ (i=1,2,\ldots, k)$. Let $\lambda_{k+1}^*\leq\lambda_{k+2}^*\leq\cdots$ be all other eigenvalues of $\hat{H}(u^*)$ and $v_{k+1}^*, v_{k+2}^*,\cdots\in T_{u^*}\mathcal{M}$ be the corresponding eigenvectors. Due to $\hat{H}(u^*)$ is self-adjoint and $\{v_j^*\}_{j=1}^k$ is orthonormal, one may assume that $\{v_j^*\}_{j\geq1}$ is an orthonormal system. It is calculated that
\begin{align*}
J_{00} v_l^*
  &= \frac{1}{\gamma_0}\bigg(2\sum_{j=1}^k\lambda_j^*\langle v_j^*,v_l^*\rangle v_j^*-\hat{H}(u^*)v_l^*\bigg)
  =\begin{cases}
     (\lambda_l^*/\gamma_0) v_l^*, & 1\leq l\leq k, \\
     -(\lambda_l^*/\gamma_0) v_l^*, & l> k,
   \end{cases}
  \\
J_{ii} v_l^*
  &= \frac{\lambda_i^*v_l^* -\hat{H}(u^*)v_l^*}{\gamma_i} + \sum_{j=1}^i\bigg(\frac{1}{\gamma_i}+\frac{1}{\gamma_j}\bigg)\lambda_j^*\langle v_j^*, v_l^*\rangle v_j^* 
  =\begin{cases}
     (\lambda_i^*/\gamma_i+\lambda_l^*/\gamma_l) v_l^*, &  1\leq l\leq i, \\
     ((\lambda_i^*-\lambda_l^*)/\gamma_i) v_l^*, &  l>i.
   \end{cases}
\end{align*}
Hence all eigenvalues of $J$ are given as
\[ \left\{\lambda_{i}^*/\gamma_0,\; -\lambda_l^*/\gamma_0,\; \lambda_{i}^*/\gamma_i+\lambda_{r}^*/\gamma_{r}, \; (\lambda_{i}^*-\lambda_{s}^*)/\gamma_i,\quad \forall\, 1\leq r\leq i\leq k<l,\; s>i \right\}. \] 
Thus, $(u^*,v_1^*,\ldots,v_k^*)$ is a linearly stable steady state of \eqref{eq:cgad-idxk} if and only if all eigenvalues of $J$ are negative, i.e., $\lambda_1^*<\lambda_2^*<\cdots<\lambda_k^*<0<\lambda_{k+1}^*\leq \lambda_{k+2}^*\leq\cdots$. Equivalently, (i) $u^*$ is a nondegenerate index-$k$ constrained saddle point; (ii) all eigenvalues of $\hat{H}(u^*)$ satisfy $\lambda_1^*<\lambda_2^*<\cdots<\lambda_k^*<0<\lambda_{k+1}^*\leq\lambda_{k+2}^*\leq\cdots$; and (iii) $v_i^*$ is the eigenvector of $\hat{H}(u^*)$ corresponding to $\lambda_i^*$, $i=1,2,\ldots,k$.
\end{proof}

\begin{remark}
For the index-1 case, the condition/conclusion (ii) in part (b) of Theorem~\ref{thm:cgad-idxk-linstab} does not need to appear in the theorem since it is implied by the nondegeneracy in (i).
\end{remark}

\subsection{Locally exponential convergence of an idealized CGAD}

Due to the complexity of constraints and nonlinearities, there are some potential difficulties in directly analyzing the global convergence of the CGAD \eqref{eq:cgad-idxk}. For simplicity, based on a similar idea to the study on an idealized version of the original GAD in \cite{LO2017SINUM}, we consider the following idealized CGAD:
\begin{equation}\label{eq:idealizedCGAD}
  \dot{u} = - F(u) + 2\sum_{i=1}^k\langle F(u),v_i(u)\rangle v_i(u), \quad u(0)=u_0\in\mathcal{M},
\end{equation}
where $v_i(u)\in T_u\mathcal{M}$, satisfying $\langle v_i(u),v_j(u)\rangle=\delta_{ij}$, are exact eigenvectors of the linear operator $\hat{H}(u):T_u\mathcal{M}\to T_u\mathcal{M}$ corresponding to the smallest $k$ eigenvalues $\lambda_1(u)\leq\lambda_2(u)\leq\cdots\leq\lambda_k(u)$.

Since $\dot{u}\in T_u\mathcal{M}$ by \eqref{eq:idealizedCGAD}, we have
\[ \frac{\mathrm{d}}{\mathrm{d} t}G_l(u)=\langle G_l'(u),\dot{u}\rangle=0, \quad l=1,2,\ldots,m. \] 
The initial condition $u(0)=u_0\in\mathcal{M}$ implies $G_l(u(t))\equiv0$, $l=1,2,\ldots,m$, i.e., $u(t)\in\mathcal{M}$. Thus, the dynamics \eqref{eq:idealizedCGAD} preserves the constraints \eqref{eq:m-constraints}. Moreover, we have the following locally exponential convergence result of the dynamics \eqref{eq:idealizedCGAD} around a nondegenerate index-$k$ constrained saddle point.

\begin{theorem}\label{thm:icgad-pg-diminishing}
Assume that $E,G_l\in C^3$, $l=1,2,\ldots,m$, and the constraints \eqref{eq:m-constraints} are regular. Let $u=u(t)$ be the solution of the dynamics \eqref{eq:idealizedCGAD}. Then
\begin{equation}\label{eq:icgad-pg-diminishing}
  \frac{\mathrm{d}}{\mathrm{d} t}\|F(u)\|^2 \leq -2\min\{-\lambda_k(u),\lambda_{k+1}(u)\}\|F(u)\|^2,\quad \forall t\geq0,
\end{equation}
where $\lambda_{k+1}(u)$ is the $(k+1)$-th smallest eigenvalue of $\hat{H}(u)$. Further, if there exists a constant $c>0$ such that $\lambda_{k}(u)\leq-c$ and $\lambda_{k+1}(u)\geq c$ for all $t\geq0$, then
\begin{equation}\label{eq:icgad-pg-diminishing-exp}
  \|F(u(t))\|\leq \mathrm{e}^{-ct}\|F(u_0)\|.
\end{equation}
\end{theorem}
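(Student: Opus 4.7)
The plan is to differentiate $\|F(u(t))\|^2$ along the idealized flow, use Lemma~\ref{lem:dmudF} to trade $F'(u)$ for $\hat{H}(u)$ (the extra ``boundary'' term conveniently being orthogonal to $F(u)$), and then perform a spectral decomposition of the resulting quantity on $T_u\mathcal{M}$. This reduces the estimate to an elementary Rayleigh-type comparison between the first $k$ eigenvalues of $\hat{H}(u)$ and the rest, after which Gr\"onwall's inequality (or direct integration) supplies the exponential rate.

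Concretely, I would first write $\tfrac{\mathrm{d}}{\mathrm{d} t}\|F(u)\|^2 = 2\langle F(u),F'(u)\dot{u}\rangle$. Because the flow preserves the constraint, $\dot{u}\in T_u\mathcal{M}$, so Lemma~\ref{lem:dmudF} applies and gives $F'(u)\dot{u}=\hat{H}(u)\dot{u}-\sum_{i,j}g_{ij}(u)\langle G_j''(u)F(u),\dot{u}\rangle G_i'(u)$. Pairing with $F(u)=P_u E'(u)\in T_u\mathcal{M}$ annihilates every $G_i'(u)$ term and leaves
\begin{equation*}
\tfrac{\mathrm{d}}{\mathrm{d} t}\|F(u)\|^2 \;=\; 2\langle F(u),\hat{H}(u)\dot{u}\rangle.
\end{equation*}
Substituting $\dot{u}=-F(u)+2\sum_{i=1}^k\langle F(u),v_i(u)\rangle v_i(u)$, invoking $\hat{H}(u)v_i(u)=\lambda_i(u)v_i(u)$ together with the self-adjointness of $\hat{H}(u)$, and expanding $F(u)=\sum_{j\geq 1}\alpha_j v_j(u)$ in a full orthonormal eigenbasis of $\hat{H}(u)$ on $T_u\mathcal{M}$ (extending the given $v_1(u),\ldots,v_k(u)$ by the remaining eigenvectors), a short calculation produces
\begin{equation*}
\langle F(u),\hat{H}(u)\dot{u}\rangle \;=\; \sum_{i=1}^k \lambda_i(u)\,\alpha_i^{2} \;-\; \sum_{j>k}\lambda_j(u)\,\alpha_j^{2}.
\end{equation*}

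The last step is the spectral bound. Monotonicity of the eigenvalues, namely $\lambda_i(u)\leq\lambda_k(u)$ for $i\leq k$ and $\lambda_j(u)\geq\lambda_{k+1}(u)$ for $j>k$, bounds each partial sum above by $-\min\{-\lambda_k(u),\lambda_{k+1}(u)\}$ times the corresponding $\sum\alpha^2$, so Parseval's identity $\|F(u)\|^2=\sum_j\alpha_j^2$ immediately yields \eqref{eq:icgad-pg-diminishing}. Under the uniform gap $\lambda_k(u(t))\leq -c$ and $\lambda_{k+1}(u(t))\geq c$ for all $t\geq 0$, the resulting scalar differential inequality $\tfrac{\mathrm{d}}{\mathrm{d} t}\|F(u)\|^2\leq -2c\|F(u)\|^2$ integrates to \eqref{eq:icgad-pg-diminishing-exp}. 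The only genuinely delicate point I anticipate is the initial reduction via Lemma~\ref{lem:dmudF}: one must exploit that both $F(u)$ and $\dot{u}$ lie in $T_u\mathcal{M}$ so that the correction involving $G_i'(u)$ disappears, turning the whole estimate into pure spectral bookkeeping on $\hat{H}(u)$.
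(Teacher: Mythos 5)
Your proposal is correct and follows essentially the same route as the paper: differentiate $\|F(u)\|^2$, apply Lemma~\ref{lem:dmudF} with $\dot u\in T_u\mathcal{M}$, use $\langle G_i'(u),F(u)\rangle=0$ to kill the correction term, substitute the flow, compare with the spectrum of $\hat H(u)$, and finish with Gr\"onwall. The only cosmetic difference is that the paper bounds the quadratic form of $A(u)=\hat H(u)-2\sum_{i\le k}\lambda_i(u)\,[v_i(u)\otimes v_i(u)]$ by its smallest eigenvalue $\min\{-\lambda_k(u),\lambda_{k+1}(u)\}$, whereas you diagonalize explicitly by expanding $F(u)$ in an orthonormal eigenbasis of $\hat H(u)$ on $T_u\mathcal{M}$ --- the same spectral comparison in different clothing.
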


\begin{proof}
Applying Lemma~\ref{lem:dmudF}, and noting that $F(u),v_i(u)\in T_u\mathcal{M}$, $\hat{H}(u)v_i(u)=\lambda_i(u)v_i(u)$, $i=1,2,\ldots,k$, we have
\begin{align*}
\frac{\mathrm{d}}{\mathrm{d} t}\|F(u)\|^2
  &= 2\big\langle F'(u)\dot{u},F(u)\big\rangle \\
  &= 2\big\langle \hat{H}(u)\dot{u}, F(u)\big\rangle-2\sum_{i=1}^m\sum_{j=1}^m g_{ij}(u) \big\langle G_j''(u)F(u),\dot{u}\big\rangle \big\langle G_i'(u),F(u)\big\rangle  \\
  &= 2\bigg\langle \hat{H}(u)\bigg(- F(u) + 2\sum_{i=1}^k\langle F(u),v_i(u)\rangle v_i(u)\bigg), F(u)\bigg\rangle \\
  &= -2\bigg\langle\bigg(\hat{H}(u)-2\sum_{i=1}^k\lambda_i(u)\big[v_i(u)\otimes v_i(u)\big]\bigg)F(u), F(u)\bigg\rangle.
\end{align*}
Clearly, the smallest eigenvalue of the linear operator $A(u):T_{u}\mathcal{M}\to T_{u}\mathcal{M}$, with
\[ A(u):=\hat{H}(u)-2\sum_{i=1}^k\lambda_i(u)\big[v_i(u)\otimes v_i(u)\big],\]
is $c_u:=\min\{-\lambda_k(u), \lambda_{k+1}(u)\}$. Thus, $\langle A(u)\varphi, \varphi\rangle\geq c_{u}\|\varphi\|^2$ for all $\varphi\in T_{u}\mathcal{M}$, and
\[ \frac{\mathrm{d}}{\mathrm{d} t}\|F(u)\|^2=-2\langle A(u)F(u), F(u)\rangle\leq -2c_{u}\|F(u)\|^2. \] 
This is \eqref{eq:icgad-pg-diminishing}. Further, when $c_u\geq c$ for some constant $c>0$ and for all $t\geq0$, \eqref{eq:icgad-pg-diminishing} becomes $\frac{\mathrm{d}}{\mathrm{d} t}\|F(u)\|^2\leq -2c\|F(u)\|^2$. Then the conclusion \eqref{eq:icgad-pg-diminishing-exp} follows from the Gr\"{o}nwall's inequality.
\end{proof}

\begin{remark}
Under all assumptions of Theorem~\ref{thm:icgad-pg-diminishing}, if some additional assumptions on compactness (e.g., the constrained Palais--Smale condition \cite{VGOLMM2}) are made, one can establish the existence of a nondegenerate index-$k$ constrained saddle point $u_*\in\mathcal{M}$ such that, for any initial data $u_0\in\mathcal{M}$ near $u_*$, the solution $u=u(t)$ of the dynamics \eqref{eq:idealizedCGAD} converges to $u_*$ as $t\to+\infty$ with exponential convergence rate:
\begin{align*}
\|u(t)-u_*\| 
&\leq \int_{t}^{\infty} \|\dot{u}(s)\|\, \mathrm{d}s  \\
&= \int_{t}^{\infty}\bigg\|F(u(s))-2\sum_{i=1}^k\langle F(u(s)),v_i(u(s))\rangle v_i(u(s))\bigg\| \mathrm{d}s \\
&= \int_{t}^{\infty}\|F(u(s))\|\, \mathrm{d}s \\
&\leq \mathrm{e}^{-ct}\|F(u_0)\|/c.
\end{align*}
\end{remark}

\section{Applications to finding excited states of single-component BECs}\label{sec:esbec}

The CGAD can be applied to solve many scientific problems. In this section, we apply the CGAD \eqref{eq:cgad-idxk} to find real-valued excited states of single-component BECs.

Within the mean-field theory, the GP energy functional of the wave function $\phi=\phi(\mathbf{x})$ of a single-component BEC in $d$ ($d=1,2,3$) dimension is given as \cite{DGPS1999RMP,BC2013KRM}
\begin{equation}\label{eq:esbec-GPenergy}
  E(\phi)=\int_{U}\left(\frac12|\nabla\phi|^2+V(\mathbf{x})|\phi|^2+\frac{\beta}{2}|\phi|^4\right)\mathrm{d}\mathbf{x},
\end{equation}
where $U\subset\mathbb{R}^d$ is the spatial domain, $V(\mathbf{x})\geq0$ is the real-valued trapping potential and the parameter $\beta\in\mathbb{R}$ characterizes the strength of the interaction. When $U$ is bounded, the homogeneous Dirichlet boundary conditions (i.e., $\phi|_{\partial U}=0$) can be imposed. In the following, we assume that all wave functions involved below are real-valued functions for simplicity.

The stationary state of a BEC is usually defined as the eigenfunction $\phi$ to the Euler--Lagrange equation (or time-independent GPE) \cite{BC2013KRM}
\begin{align}
  -\frac12\Delta\phi+V(\mathbf{x})\phi+\beta|\phi|^2\phi =\mu\phi, \label{eq:esbec-tigpe1}
\end{align}
under the normalization constraint
\begin{align}
  \|\phi\|^2:=\int_{U}|\phi(\mathbf{x})|^2\mathrm{d}\mathbf{x} =1, \label{eq:esbec-tigpe2}
\end{align}
with $\mu$ the corresponding eigenvalue or chemical potential. When $\phi$ is an eigenfunction of \eqref{eq:esbec-tigpe1}-\eqref{eq:esbec-tigpe2}, the corresponding chemical potential is given as
\begin{equation}\label{eq:esbec-muphi}
  \mu(\phi)=\int_{U}\left(\frac12|\nabla\phi|^2+V(\mathbf{x})|\phi|^2+\beta|\phi|^4\right)\mathrm{d}\mathbf{x}
  =E(\phi)+\frac{\beta}{2}\int_{U}|\phi|^4\mathrm{d}\mathbf{x}.
\end{equation}
The ground state is a stationary state with the lowest value of GP energy functional $E$, while stationary states with higher energies are called excited states \cite{BC2013KRM}. Noticing that $E'(\phi)=-\Delta\phi+2V(\mathbf{x})\phi+2\beta|\phi|^2\phi$ and setting $G(\phi)=\|\phi\|^2-1$, we have $G'(\phi)=2\phi$, and \eqref{eq:esbec-tigpe1}-\eqref{eq:esbec-tigpe2} turns to be
\begin{equation}
  E'(\phi)=\mu G'(\phi),\quad G(\phi)=0.
\end{equation}
Thus, all eigenfunctions of \eqref{eq:esbec-tigpe1}-\eqref{eq:esbec-tigpe2} are exactly the constrained critical points of the GP energy functional $E$ \eqref{eq:esbec-GPenergy} on the unit spherical manifold $S=\{\phi\in L^2(U): \phi|_{\partial U}=0, G(\phi)=\|\phi\|^2-1=0,E(\phi)<\infty\}$. The ground state is the constrained minimizer of $E$ \eqref{eq:esbec-GPenergy} on $S$. Since constrained saddle points possess higher energy than that of the ground state, they are sure to be excited states. Although there may be excited states that are not constrained saddle points, such as constrained local minima with higher energies than that of the ground state, here we only consider the excited states corresponding to constrained saddle points of the GP energy functional $E$.

Taking $\langle\cdot,\cdot\rangle$ as the real $L^2$ inner product (or duality pairing), the tangent space of the constraint manifold $S$ at $\phi\in S$ is $T_{\phi}S=\{v\in L^2(U):\langle G'(\phi),v\rangle=0\}=\spn\{\phi\}^\bot$. For $\phi\in S$, the orthogonal projection operator from $L^2(U)$ onto $T_{\phi}S$ is $P_{\phi}=I-(\phi\otimes\phi)$. Then, the projected gradient of the energy functional $E$ at $\phi$ reads as
\begin{align} 
F(\phi):=P_{\phi}E'(\phi)=E'(\phi)-\mu(\phi)G'(\phi)=-\Delta\phi+2V(\mathbf{x})\phi+2\beta|\phi|^2\phi-2\mu(\phi)\phi, 
\end{align}
with $\mu(\phi)=\frac12\langle E'(\phi),\phi\rangle$ given in \eqref{eq:esbec-muphi}. The effective and projected Hessian operators at $\phi\in S$ are, respectively, given as $H(\phi) =E''(\phi)-\mu(\phi)G''(\phi)$ and $\hat{H}(\phi) =P_{\phi}H(\phi)P_{\phi}$ where $E''(\phi) = -\Delta+2\left(V(\mathbf{x})+3\beta|\phi|^2\right)I$ and $G''(\phi)=2I$ with $I$ the identity operator.

We remark that any constrained saddle point must be an excited state, thus we call an index-$k$ constrained saddle point $\phi_k$ an \emph{index-$k$ excited state}. Now one can distinguish the ground state $\phi_g\in S$ and different excited states according to their energies, chemical potentials (i.e., eigenvalues), and Morse indices. A very interesting question is whether the index-$k$ excited state $\phi_k$ is precisely the \emph{$k$-th excited state} in the sense that
\begin{equation}\label{eq:energy-order}
  E(\phi_g)<E(\phi_1)<E(\phi_2)<\cdots<E(\phi_k)<\cdots,
\end{equation}
and/or whether it is the \emph{$k$-th eigenstate} such that
\begin{equation}\label{eq:chemical-order}
  \mu(\phi_g)<\mu(\phi_1)<\mu(\phi_2)<\cdots<\mu(\phi_k)<\cdots.
\end{equation}

\subsection{Properties of excited states in linear case}\label{sec:esbec-b0}

For the linear case (i.e., $\beta=0$), the nonlinear eigenvalue problem \eqref{eq:esbec-tigpe1}-\eqref{eq:esbec-tigpe2} reduces to
\begin{equation}\label{eq:neigp:beta=0}
  -\frac12\Delta\phi+V(\mathbf{x})\phi=\mu\phi,\quad G(\phi)=\|\phi\|^2-1=0,\quad \mathbf{x}\in U
\end{equation}
with homogeneous Dirichlet boundary conditions $\phi|_{\partial U}=0$, and the energy \eqref{eq:esbec-GPenergy} and the chemical potential \eqref{eq:esbec-muphi} are identical. As a result, \eqref{eq:energy-order} and \eqref{eq:chemical-order} are completely equivalent.

The following result provides an exact characterization for all excited states in linear case.

\begin{theorem}\label{thm:esbec-b0}
Assume that $U$ is a bounded domain with Lipschitz boundary, $0\leq V(\mathbf{x})\in L^{\infty}(U)$, $\beta=0$, and $\phi_*\in S$ is an eigenfunction of the linear eigenproblem \eqref{eq:neigp:beta=0} with $\mu_*=\mu(\phi_*)=E(\phi_*)$ the corresponding eigenvalue. Let $0<\mu_0<\mu_1\leq\mu_2\leq\cdots$ be all the eigenvalues with $\phi_0,\phi_1,\phi_2,\cdots$ the corresponding orthonormal eigenfunctions of the linear eigenproblem \eqref{eq:neigp:beta=0}. Then $\phi_*$ is an index-$k$ ($k=1,2,\ldots$) excited state if and only if $\mu_{k-1}<\mu_*=\mu_k$. Moreover, the unstable tangent subspace of an index-$k$ excited state $\phi_*$ is $T^-(\phi_*)=\mathrm{span}\{\phi_0,\phi_1,\ldots,\phi_{k-1}\}$.
\end{theorem}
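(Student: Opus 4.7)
The plan is to reduce everything to the linear spectral problem and exploit the affine structure of the effective Hessian in the $\beta=0$ case. The key observation is that once the linear Schr\"odinger operator's eigenbasis is available, both the identification of $T^-(\phi_*)$ and the negative-eigenvalue count of $\hat{H}(\phi_*)$ become elementary bookkeeping.

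First, I would compute $H(\phi_*)$ explicitly. With $\beta=0$ one has $E'(\phi)=-\Delta\phi+2V\phi$, $E''(\phi)=-\Delta+2VI$, and $G''(\phi)=2I$, so at any eigenfunction $\phi_*\in S$ with multiplier $\mu_*$,
\begin{equation*}
H(\phi_*) \;=\; E''(\phi_*) - \mu_*\, G''(\phi_*) \;=\; -\Delta + 2V - 2\mu_* I.
\end{equation*}
Since $U$ is a bounded Lipschitz domain and $V\in L^\infty(U)$, the Dirichlet realization of $-\tfrac12\Delta+V$ has compact resolvent, so its spectrum is purely discrete and exhausted by $\{\mu_n\}_{n\ge0}$, with $\{\phi_n\}_{n\ge0}$ a complete orthonormal basis of $L^2(U)$. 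Consequently $H(\phi_*)$ is diagonalised in this basis, having eigenvalue $2(\mu_n-\mu_*)$ on $\phi_n$.

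Next I would pass from $H(\phi_*)$ to $\hat{H}(\phi_*)$. Because $H(\phi_*)\phi_*=0$ and $H(\phi_*)$ is self-adjoint, the subspace $T_{\phi_*}S=\mathrm{span}\{\phi_*\}^{\perp}$ is invariant under $H(\phi_*)$, so the projected Hessian $\hat{H}(\phi_*)=P_{\phi_*}H(\phi_*)P_{\phi_*}$ coincides with the plain restriction $H(\phi_*)|_{T_{\phi_*}S}$. If $\mu_*$ is a simple eigenvalue then $\phi_*=\pm\phi_k$ where $\mu_k=\mu_*$; if $\mu_*$ is degenerate I would rotate the orthonormal basis of the $\mu_*$-eigenspace so that $\phi_*$ is one of its members, allowing me to assume $\phi_*=\phi_k$ without loss of generality. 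Then the spectrum of $\hat{H}(\phi_*)$ on $T_{\phi_*}S$ is exactly $\{2(\mu_n-\mu_k):n\neq k\}$ with eigenfunctions $\{\phi_n\}_{n\neq k}$.

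Finally I would count negative eigenvalues in both directions. For sufficiency, the hypothesis $\mu_{k-1}<\mu_*=\mu_k$ localises the negative part to $n=0,1,\ldots,k-1$, giving $\mathrm{index}(\phi_*)=k$ and $T^-(\phi_*)=\mathrm{span}\{\phi_0,\phi_1,\ldots,\phi_{k-1}\}$. For necessity, if $\mathrm{index}(\phi_*)=k$ then exactly $k$ of the $\mu_n$ are strictly less than $\mu_*$; by monotonicity of $\{\mu_n\}$ these must be $\mu_0,\ldots,\mu_{k-1}$, yielding $\mu_{k-1}<\mu_*\le\mu_k$, and since $\mu_*$ is itself an eigenvalue it must equal $\mu_k$. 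The main obstacle is the careful alignment of the orthonormal eigenbasis with the prescribed $\phi_*$ when $\mu_k$ has multiplicity greater than one: once aligned, $\phi_*$ is a degenerate constrained critical point whose nullity equals the multiplicity of $\mu_k$ minus one, yet whose Morse index is still exactly $k$, so the theorem remains consistent with such degeneracies.
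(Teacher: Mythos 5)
Your proposal is correct and follows essentially the same route as the paper: both identify $\hat{H}(\phi_*)=2\bigl(-\tfrac12\Delta+V-\mu_*I\bigr)$ restricted to $T_{\phi_*}S$ (using $H(\phi_*)\phi_*=0$ and self-adjointness) and then count negative eigenvalues against the ordered spectrum $\{\mu_n\}$, with the sufficiency direction in the paper likewise taking $\phi_*=\phi_k$ without loss of generality. Your explicit basis rotation in the degenerate case and the remark on the nullity are a slightly more careful rendering of the same spectral bookkeeping, not a different argument.
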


\begin{proof} 
Denote $A:=-\frac12\Delta+V(\mathbf{x})I$. According to the spectral theory of uniformly elliptic operators \cite{GT}, the set of all eigenfunctions of $A$ forms a complete basis of $L^2(U)$. Since $A\phi_*=\mu_*\phi_*$, we have $H(\phi_*)=-\Delta+2(V(\mathbf{x})-\mu_*)I=2(A-\mu_*I)$, and therefore, $H(\phi_*)\phi_*=2(A\phi_*-\mu_*\phi_*)=0$. Then, for any $\xi\in T_{\phi_*}S$, noting that $P_{\phi_*}\xi=\xi$, we have
\begin{align*}
  \hat{H}(\phi_*)\xi
  &= P_{\phi_*}H(\phi_*)\xi 
  = H(\phi_*)\xi-\langle\phi_*,H(\phi_*)\xi\rangle\phi_* \\
  &= H(\phi_*)\xi-\langle H(\phi_*)\phi_*,\xi\rangle\phi_* 
  = H(\phi_*)\xi
  =2(A-\mu_*I)\xi, 
\end{align*}
Thus, $\hat{H}(\phi_*)=2(A-\mu_*I):T_{\phi_*}S\to T_{\phi_*}S$.

{\em Necessity.} Suppose that $\phi_*$ is an index-$k$ excited state. Then the linear operator $\hat{H}(\phi_*)$ has exactly $k$ negative eigenvalues. Let $\lambda_0\leq\lambda_1\leq\cdots\leq\lambda_{k-1}<0\leq\lambda_k\leq\lambda_{k+1}\leq\cdots$ be all eigenvalues of $\hat{H}(\phi_*)$ with $\{\eta_j\}_{j=0}^{\infty}\subset T_{\phi_*}S$ the corresponding orthonormal eigenfunctions. Then $L^2(U)=\spn\{\phi_*\}\oplus T_{\phi_*}S=\spn\{\phi_*,\eta_j,j=0,1,\ldots\}$. Noting that $A\phi_*=\mu_*\phi_*$ and
\[ A\eta_j=\left(\mu_*I+\frac12\hat{H}(\phi_*)\right)\eta_j =\left(\mu_*+\frac{\lambda_j}{2}\right)\eta_j,\quad j=0,1,\ldots, \]
one obtains that all eigenvalues of $A$ are
\[ \mu_*+\frac{\lambda_0}{2}\leq \mu_*+\frac{\lambda_1}{2}\leq\cdots\leq\mu_*+\frac{\lambda_{k-1}}{2}
<\mu_*\leq \mu_*+\frac{\lambda_{k}}{2}\leq\mu_*+\frac{\lambda_{k+1}}{2}\leq\cdots. \]
Therefore, $\mu_{k-1}=\mu_*+\frac{\lambda_{k-1}}{2}<\mu_*=\mu_k$.

{\em Sufficiency.} 
Suppose that $\mu_{k-1}<\mu_*=\mu_k$. Without loss of generality, assume $\phi_*=\phi_k$. Then $T_{\phi_*}S=\spn\{\phi_*\}^\bot=\spn\{\phi_0,\phi_1,\ldots,\phi_{k-1},\phi_{k+1},\ldots\}$. Note that
\begin{align*}
\hat{H}(\phi_*)\phi_i=2(A-\mu_kI)\phi_i=2(\mu_i-\mu_k)\phi_i,\quad i=0,1,\ldots,k-1,\ldots,k+1,\ldots.
\end{align*}
The maximum negative definite subspace of the linear operator $\hat{H}(\phi_*):T_{\phi_*}S\to T_{\phi_*}S$ is given as $T^-= \spn\{\phi_0,\phi_1,\ldots,\phi_{k-1}\}$. Thus, the Morse index of $\phi_*$ is $\dim(T^-)=k$, i.e., $\phi_*$ is an index-$k$ excited state.
\end{proof}

\begin{remark}
The result of Theorem~\ref{thm:esbec-b0} can be extended to the cases when $U=\mathbb{R}^d$ and the potential $V(\mathbf{x})$ satisfies: $V(\mathbf{x})$ is continuous in $\mathbb{R}^d$, $V(\mathbf{x})\geq0$ and $\lim_{|\mathbf{x}|\to\infty}V(\mathbf{x})=\infty$. In addition, Theorem~\ref{thm:esbec-b0} can also be proved by applying the generalized Courant-Fischer formula or min-max principle for self-adjoint operators (see, e.g., \cite{LiebLoss2001}).
\end{remark}

Noting that, when $\beta=0$, $E(\phi)=\mu(\phi)$, we have the following corollaries.

\begin{corollary}\label{cor:esbec-b0}
Under assumptions of Theorem~\ref{thm:esbec-b0}, if further $\mu_0<\mu_1<\cdots<\mu_k$, i.e., $\mu_0,\mu_1,\ldots,\mu_{k-1}$ are single-fold eigenvalues, then $\phi_*$ is an index-$k$ excited state if and only if it is the $k$-th eigenstate defined in \eqref{eq:chemical-order} (equivalently, it is the $k$-th excited state defined in \eqref{eq:energy-order}). In particular, since $\mu_0$ is single-fold, the index-1 excited state is exactly the first excited state and the first eigenstate. 
\end{corollary}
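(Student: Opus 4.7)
The plan is to build directly on Theorem~\ref{thm:esbec-b0}: that result already supplies the sharp characterization that $\phi_*$ is an index-$k$ excited state if and only if $\mu_{k-1}<\mu_*=\mu_k$, and the corollary merely upgrades this to an identification with the classical ordering-based notions of excited state under the simplicity hypothesis. A preliminary observation simplifies the task: since $\beta=0$, the GP energy $E(\phi)$ in \eqref{eq:esbec-GPenergy} and the chemical potential $\mu(\phi)$ in \eqref{eq:esbec-muphi} coincide on eigenfunctions, so the two orderings \eqref{eq:energy-order} and \eqref{eq:chemical-order} become literally the same ordering, and only the single equivalence involving $k$-th eigenstates needs to be established.

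For the forward direction, I would assume $\phi_*$ is an index-$k$ excited state. Theorem~\ref{thm:esbec-b0} gives $\mu_{k-1}<\mu_*=\mu_k$, and chaining with the hypothesis $\mu_0<\mu_1<\cdots<\mu_{k-1}$ yields the strict chain $\mu_0<\mu_1<\cdots<\mu_{k-1}<\mu(\phi_*)$, which is exactly the defining property of the $k$-th eigenstate in \eqref{eq:chemical-order}. Conversely, if $\phi_*$ is the $k$-th eigenstate then $\mu(\phi_*)>\mu_{k-1}$; since the spectrum is enumerated in nondecreasing order as $\mu_0\leq\mu_1\leq\cdots$, strict dominance past $\mu_{k-1}$ forces $\mu_*=\mu_k$, at which point Theorem~\ref{thm:esbec-b0} classifies $\phi_*$ as index-$k$.

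The final sentence of the corollary specializes to $k=1$ and only requires simplicity of $\mu_0$. This is a classical fact about the Dirichlet Schr\"odinger operator $-\tfrac12\Delta+V(\mathbf{x})$ on a bounded connected domain with $0\leq V\in L^\infty(U)$: the strong maximum principle (equivalently, Krein--Rutman) forces the ground state eigenfunction to be of constant sign, which is incompatible with orthogonality to another ground-state eigenfunction and hence rules out multiplicity. With $\mu_0$ automatically single-fold, the hypothesis at $k=1$ is vacuous and the equivalence applies unconditionally.

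The only point that requires any care — and what I would regard as the main (mild) obstacle — is that the corollary permits $\mu_k$ itself to be degenerate, since the simplicity assumption is only on the strictly smaller eigenvalues $\mu_0,\ldots,\mu_{k-1}$. I would therefore phrase both directions so that they rest solely on the sharp inequality $\mu_{k-1}<\mu_*=\mu_k$, never on $\phi_*$ being uniquely determined within the $\mu_k$-eigenspace; this keeps the argument robust to degeneracies at level $\mu_k$ without any extra case analysis.
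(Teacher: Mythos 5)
Your proposal is correct and follows exactly the route the paper intends: the paper states this corollary without proof, treating it as an immediate consequence of Theorem~\ref{thm:esbec-b0} together with the observation that $E(\phi)=\mu(\phi)$ when $\beta=0$, which is precisely how you argue (your added remarks on possible degeneracy of $\mu_k$ and on the classical simplicity of $\mu_0$ are sound and consistent with the theorem's enumeration $\mu_0<\mu_1\leq\mu_2\leq\cdots$).
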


\begin{corollary}\label{cor:esbec-b0-saddle}
Under assumptions of Theorem~\ref{thm:esbec-b0}, all excited states are constrained saddle points, and the ground state $\phi_g$ (up to the sign) is the only possible constrained local minimizer and thus the constrained global minimizer.
\end{corollary}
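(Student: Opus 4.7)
The plan is to combine Theorem~\ref{thm:esbec-b0} with the unboundedness above of the spectrum of $A:=-\tfrac12\Delta+V(\mathbf{x})I$ to control the sign of $\hat{H}(\phi_*)$ at every constrained critical point $\phi_*$. Since $\beta=0$, every stationary state is (up to normalization) an eigenfunction of $A$, and from the proof of Theorem~\ref{thm:esbec-b0} the projected Hessian at such a $\phi_*$ takes the particularly simple form $\hat{H}(\phi_*)=2(A-\mu_*I)$ on $T_{\phi_*}S$, where $\mu_*=E(\phi_*)=\mu(\phi_*)$.

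For the first assertion, let $\phi_*$ be an excited state, so $\mu_*>\mu_0$. Let $k\geq 1$ be the smallest integer with $\mu_k=\mu_*$; then $\mu_{k-1}<\mu_*=\mu_k$, and Theorem~\ref{thm:esbec-b0} gives $\mathrm{index}(\phi_*)=k\geq 1$, so $\hat{H}(\phi_*)$ has at least one strictly negative eigenvalue and $\phi_*$ cannot be a constrained local minimizer. To exclude that $\phi_*$ is a constrained local maximizer, I would invoke the fact that $\mu_j\to+\infty$, so $\hat{H}(\phi_*)=2(A-\mu_*I)$ also has arbitrarily large positive eigenvalues on $T_{\phi_*}S$, producing tangent directions along which the energy strictly increases. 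Consequently $\phi_*$ is neither a local minimum nor a local maximum, hence it is a constrained saddle point by definition.

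For the second assertion, I would argue by contradiction: suppose $\phi_*\in S$ is any constrained local minimizer. Then $\phi_*$ is a constrained critical point, hence an eigenfunction of $A$ with some eigenvalue $\mu_*\geq\mu_0$. If $\mu_*>\mu_0$, the previous paragraph already produces a negative eigenvalue of $\hat{H}(\phi_*)$ and hence a tangent direction of strict energy decrease, contradicting local minimality. Therefore $\mu_*=\mu_0$, and the standing hypothesis $\mu_0<\mu_1$ ensures the eigenspace of $\mu_0$ is one-dimensional, so $\phi_*=\pm\phi_0=\pm\phi_g$. Since $\phi_g$ is by definition the global minimizer of $E$ on $S$, this identifies all constrained local minimizers and shows they coincide with the global minimizer up to sign.

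Most of this is bookkeeping driven by Theorem~\ref{thm:esbec-b0}; the only step that does not reduce to a direct citation is the exclusion of constrained local maxima, which is where the unbounded-above spectrum of $A$ must enter. That observation is therefore the main (and essentially only) obstacle, but it is a mild one since the compactly embedded Sobolev framework of the hypothesis already delivers a discrete spectrum diverging to $+\infty$.
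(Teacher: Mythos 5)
Your proposal is correct and follows essentially the route the paper intends: the paper states this corollary without an explicit proof, as a direct consequence of Theorem~\ref{thm:esbec-b0} (every excited state has $\mu_{k-1}<\mu_*=\mu_k$ for some $k\geq1$, hence Morse index $k\geq1$, while a constrained local minimizer forces $\mu_*=\mu_0$ and therefore $\phi_*=\pm\phi_0=\pm\phi_g$ by simplicity of $\mu_0$). Your explicit exclusion of local maxima via the unbounded-above spectrum of $A$ (equivalently, the positive directions $\phi_j\in T_{\phi_*}S$ with $\mu_j>\mu_*$) is the one detail the paper leaves implicit, and it is handled correctly.
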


\begin{example}\label{ex:esbec-b0-box}
Assume $\beta=0$. Take $V(\mathbf{x})$ as the box potential:
\[
  V_{\mathrm{box},L}(\mathbf{x})=
  \begin{cases}
    0, & \mathbf{x}\in U:=[0,L]^d, \\
    \infty, & \mathbf{x}\notin U,
  \end{cases} \quad  \mathbf{x}=(x_1,x_2,\cdots,x_d)^T
\]
with $L>0$ the width of the box. The eigenpairs of the linear eigenproblem \eqref{eq:neigp:beta=0} are
\begin{equation}\label{eq:b0eigpairs-box}
  \phi^{\mathrm{box}}_{\mathbf{j}}(\mathbf{x})=\left(\frac2L\right)^{d/2} \prod_{\alpha=1}^{d} \sin\frac{(j_\alpha+1)\pi x_\alpha}{L}, 
  \quad\mathbf{x}\in U,\quad 
  \mu^{\mathrm{box}}_{\mathbf{j}} = \frac{\pi^2}{2L^2}\sum_{\alpha=1}^d (j_\alpha+1)^2,
\end{equation}
for all $\mathbf{j}=(j_1,j_2,\cdots,j_d)\in\mathbb{N}^d$. 
From Theorem~\ref{thm:esbec-b0} and Corollary~\ref{cor:esbec-b0}, we have the following conclusions:
\begin{enumerate}[(i)]
  \item $\phi^{\mathrm{box}}_{\mathbf{0}}$ is the ground state.
  \item When $d=1$, we have $\mu^{\mathrm{box}}_j=\frac{\pi^2}{2L^2}(j+1)^2$, $j=0,1,\cdots$ and therefore $\mu^{\mathrm{box}}_0<\mu^{\mathrm{box}}_1<\mu^{\mathrm{box}}_2<\mu^{\mathrm{box}}_3<\cdots$. Thus $\phi^{\mathrm{box}}_k$ ($k\geq1$) is exactly an index-$k$ excited state as well as the $k$-th excited state (and the $k$-th eigenstate) with its unstable tangent subspace $T^-(\phi^{\mathrm{box}}_k)=\spn\{\phi^{\mathrm{box}}_j:j=0,1,\cdots,k-1\}$.
  \item When $d=2$, the first few stationary states with corresponding energy levels and Morse indices are listed in Table~\ref{tab:eis-linearbox}. Thus any $\phi\in S\cap\spn\{\phi^{\mathrm{box}}_{(1,0)},\phi^{\mathrm{box}}_{(0,1)}\}$ is an index-1 excited state as well as the first excited state with its unstable tangent subspace spanned by $\phi^{\mathrm{box}}_{(0,0)}$. However, the second excited state $\phi^{\mathrm{box}}_{(1,1)}$ is actually an index-3 excited state with its unstable tangent subspace spanned by $\phi^{\mathrm{box}}_{(0,0)}$, $\phi^{\mathrm{box}}_{(1,0)}$ and $\phi^{\mathrm{box}}_{(0,1)}$. In general, as shown in Table~\ref{tab:eis-linearbox}, the order of energies or chemical potentials of excited states is accordance with that of Morse indices.
\end{enumerate}
\end{example}

\begin{table}[!ht]\footnotesize
\centering
\caption{Energy levels and indices of the first few excited states for the box potential in 2D with $\beta=0$.}
\label{tab:eis-linearbox}
\begin{tabular*}{\textwidth}{@{\extracolsep{\fill}}cllllllll}
  \hline
  $\mathbf{j}$ & $(0,0)$ & $(1,0),(0,1)$ & $(1,1)$ & $(2,0),(0,2)$ & $(2,1),(1,2)$ & $(3,0),(0,3)$ & $(2,2)$ \\  \hline
  $(2L^2/\pi^2)\mu^{\mathrm{box}}_{\mathbf{j}}$ & 2 & 5 & 8 & 10 & 13 & 17 & 18  \\ 
  energy levels & 0 & 1 & 2 & 3 & 4 & 5 & 6 \\  
  indices & 0 & 1 & 3 & 4 & 6 & 8 & 10  \\
  \hline
\end{tabular*}
\end{table}

\begin{example}\label{ex:esbec-b0-ho}
Assume $\beta=0$. Take $V(\mathbf{x})$ as the harmonic oscillator potential:
\[
  V_{\mathrm{ho}}(\mathbf{x})= \frac12|\mathbf{x}|^2= \frac12\sum_{\alpha=1}^d x_\alpha^2,\quad \mathbf{x}=(x_1,x_2,\cdots,x_d)^T\in U=\mathbb{R}^d.
\]
Then the eigenpairs of the linear eigenproblem \eqref{eq:neigp:beta=0} are given as
\begin{equation}\label{eq:b0eigpairs-ho}
  \phi^{\mathrm{ho}}_{\mathbf{j}}(\mathbf{x}) = \prod_{\alpha=1}^{d}\hat{h}_{j_\alpha}(x_\alpha), \quad
  \mu^{\mathrm{ho}}_{\mathbf{j}} = |\mathbf{j}|+\frac{d}{2},\quad
  \mathbf{j}=(j_1,j_2,\cdots,j_d)\in\mathbb{N}^d,
\end{equation}
where $|\mathbf{j}|:=\sum_{\alpha=1}^{d}j_{\alpha}$, $\hat{h}_j(x)$ are the Hermite functions:
\begin{equation}\label{eq:hermitefuns}
  \hat{h}_j(x)=\frac{1}{\pi^{1/4}\sqrt{2^j j!}}\mathrm{e}^{-x^2/2}h_j(x), \quad j=0,1,2,\cdots,
\end{equation}
with $h_j(x)=(-1)^j\mathrm{e}^{x^2}\frac{\mathrm{d}^j}{\mathrm{d}x^j}(\mathrm{e}^{-x^2})$ the Hermite polynomials. Obviously, $\phi^{\mathrm{ho}}_{\mathbf{0}}$ is the ground state. From Theorem~\ref{thm:esbec-b0} and Corollary~\ref{cor:esbec-b0}, any function $\phi\in S\cap\spn\{\phi^{\mathrm{ho}}_{\mathbf{j}}:\mathbf{j}\in\mathbb{N}^d,|\mathbf{j}|=k\}$ is the $k$-th excited state and an index-$i_d(k)$ excited state with its unstable tangent subspace $T^-(\phi)=\spn\{\phi^{\mathrm{ho}}_{\mathbf{j}}:\mathbf{j}\in\mathbb{N}^d,|\mathbf{j}|\leq k-1\}$, where
\[
  i_d(k)=\#\{\mathbf{j}\in\mathbb{N}^d:|\mathbf{j}|\leq k-1\}=
  \begin{cases}
    k, &  d=1, \\
    \frac12k(k+1), &  d=2, \\
    \frac16k(k+1)(k+2), & d=3.
  \end{cases}
\]
It is observed that, $i_d(k)=k$ if either $d=1$ or $k=0,1$; Otherwise, $i_d(k)>k$.
\end{example}

\subsection{CGAD for single-component BECs and its time discretization}

We now propose the formulation of the CGAD for computing excited states of a single-component BEC and its efficient time discretization scheme.

Let $\phi\in S$ be an approximation of an index-$k$ excited state and $\{v_i\}_{i=1}^k\subset T_{\phi}S$ be the approximations of corresponding unstable tangent directions. Noting that $G'(\phi)=2\phi$ and $G''(\phi)=2I$, by applying the CGAD \eqref{eq:cgad-idxk} to the single-component BEC model, we obtain
\begin{equation}\label{eq:cgad-idxk-bec}
  \left\{
  \begin{aligned}
  \gamma_0\partial_t\phi(\mathbf{x},t) &= -F(\phi) + 2\sum_{j=1}^k \langle F(\phi),v_j\rangle v_j,\\
  \gamma_i\partial_t v_i(\mathbf{x},t) &= -\hat{H}(\phi)v_i+\sum_{j=1}^i \lambda_{ij} v_j +2\bar{\lambda}_i \phi,\quad i=1,2,\ldots,k,
  \end{aligned}\right.
\end{equation}
where $\gamma_i>0$ ($i=0,1,\ldots,k$) are relaxation parameters, Lagrange multipliers $\lambda_{ij}$ and $\bar{\lambda}_{i}$ are given as
\begin{align*}
  \lambda_{ij} = \left(1+\frac{\gamma_i}{\gamma_j}-\delta_{ij}\right)\big\langle \hat{H}(\phi) v_i,v_j\big\rangle, \quad
  \bar{\lambda}_{i} = \frac{\gamma_i}{2\gamma_0\|\phi\|^2}\bigg\langle v_i, F(\phi) - 2\sum_{j=1}^k\langle F(\phi),v_j\rangle v_j\bigg\rangle, \quad 1\leq j\leq i\leq k.
\end{align*}
Lemma~\ref{lem:cgad-idxk-conpty} states that \eqref{eq:cgad-idxk-bec} preserves constraints $\phi\in S$, $v_i\in T_{\phi}S$, $\langle v_i,v_j\rangle=\delta_{ij}$, i.e.,
\begin{equation}\label{eq:cgad-idxk-bec-cons}
  \|\phi\|^2=1,\quad \langle \phi,v_i\rangle=0,\quad \langle v_i,v_j\rangle=\delta_{ij},\quad 1\leq j\leq i\leq k.
\end{equation}
Using \eqref{eq:cgad-idxk-bec-cons}, we have $\langle F(\phi),v_i\rangle= \langle E'(\phi),P_{\phi}v_i\rangle = \langle E'(\phi),v_i\rangle$, 
\begin{align*}
&\hat{H}(\phi)v_i = P_{\phi}(E''(\phi)-2\mu(\phi)I)P_{\phi}v_i 
= E''(\phi)v_i-\langle E''(\phi)\phi,v_i\rangle\phi-2\mu(\phi)v_i, \\
&\lambda_{ij}=\left(1+\frac{\gamma_i}{\gamma_j}-\delta_{ij}\right)\langle E''(\phi)v_i,v_j\rangle-2\mu(\phi)\delta_{ij}, \qquad
\bar{\lambda}_i = -\frac{\gamma_i}{2\gamma_0}\langle E'(\phi),v_i\rangle.
\end{align*}
Noting that $E'(\phi)=-\Delta\phi+2V(\mathbf{x})\phi+2\beta|\phi|^2\phi$, $E''(\phi) = -\Delta+2\left(V(\mathbf{x})+3\beta|\phi|^2\right)I$, by taking $\gamma_0=\gamma_1=\cdots=\gamma_k=2$, \eqref{eq:cgad-idxk-bec} can be simplified as
\begin{equation}\label{eq:cgad-esbec}
  \left\{
  \begin{aligned}
  \partial_t\phi(\mathbf{x},t) &= \frac{1}{2}\Delta\phi-V(\mathbf{x})\phi-\beta|\phi|^2\phi+\mu(\phi)\phi + 2\sum_{j=1}^k \xi_j v_j,\\
  \partial_tv_i(\mathbf{x},t) & = \frac{1}{2}\Delta v_i-V(\mathbf{x}) v_i-3\beta|\phi|^2v_i +\sigma_{i}\phi + \sum_{j=1}^i\nu_{ij}v_j,\quad i=1,2\ldots,k,
  \end{aligned}\right.
\end{equation}
where
\begin{align}
\xi_i &= \xi_i(\phi,v_i)
  = \int_{U}\left(\frac12\nabla\phi\cdot\nabla v_i+V(\mathbf{x})\phi v_i+\beta|\phi|^2\phi v_i\right)\mathrm{d}\mathbf{x}, 
  \label{eq:cgad-esbec-xi} \\
\nu_{ij} &= \nu_{ij}(\phi,v_i,v_j)
  = (2-\delta_{ij})\int_{U}\left(\frac12\nabla v_i\cdot\nabla v_j+V(\mathbf{x})v_iv_j+3\beta|\phi|^2v_iv_j\right)\mathrm{d}\mathbf{x}, 
  \label{eq:cgad-esbec-nu} \\
\sigma_{i} & =\sigma_{i}(\phi,v_i)
  = 2\beta\int_{U} |\phi|^2\phi v_i\mathrm{d}\mathbf{x},\quad 1\leq j\leq i\leq k.
  \label{eq:cgad-esbec-dl}
\end{align}

Various suitable numerical schemes could be used to solve \eqref{eq:cgad-esbec}. For simplicity and efficiency, we use the prediction-correction strategy to discretize \eqref{eq:cgad-esbec} in time with a (semi-implicit) backward-forward Euler scheme followed by the Gram--Schmidt orthonormalization process to preserve the constraints \eqref{eq:cgad-idxk-bec-cons} in the discretized level.

The initial data $(\phi^0,v_1^0,\ldots,v_k^0)$ is chosen satisfying the constraints \eqref{eq:cgad-idxk-bec-cons}. Set $t_n=n\tau$, $n=0,1,\ldots$, with $\tau>0$ a selected time step length. Let $(\phi^n,v_1^n,\ldots,v_k^n)$ be the numerical approximation of the solution of \eqref{eq:cgad-esbec} at $t=t_n$. We adopt the following iterative scheme to compute $(\phi^{n+1},v_1^{n+1},\ldots,v_k^{n+1})$ from $(\phi^n,v_1^n,\ldots,v_k^n)$:
\begin{equation}\label{eq:cgad-esbec-semi}
  \left\{
  \begin{aligned}
  &\frac{\tilde{\phi}^{n+1}-\phi^n}{\tau} = \frac{1}{2}\Delta\tilde{\phi}^{n+1}-V(\mathbf{x})\phi^n-\beta|\phi^n|^2\phi^n+\mu^n\phi^n + 2\sum_{j=1}^k \xi_j^n v_j^n,\\
  &\,\frac{\tilde{v}_i^{n+1}-v_i^n}{\tau} = \frac{1}{2}\Delta\tilde{v}_i^{n+1}-V(\mathbf{x}) v_i^n-3\beta|\phi^n|^2v_i^n +\sigma_{i}^n\phi^n + \sum_{j=1}^i\nu_{ij}^nv_j^n,\quad  i=1,2,\ldots,k,\\
  & \big[\phi^{n+1},v_1^{n+1},\ldots,v_k^{n+1}\big] = \mathrm{GSON}\left(\big[\tilde{\phi}^{n+1},\tilde{v}_1^{n+1},\ldots,\tilde{v}_k^{n+1}\big]\right),
  \end{aligned}\right.
\end{equation}
where $\mu^n=\mu(\phi^n)$ \eqref{eq:esbec-muphi}, $\xi_i^n = \xi_i(\phi^n,v_i^n)$ \eqref{eq:cgad-esbec-xi}, $\nu_{ij}^n = \nu_{ij}(\phi^n,v_i^n,v_j^n)$ \eqref{eq:cgad-esbec-nu}, $\sigma_{i}^n =\sigma_{i}(\phi^n,v_i^n)$ \eqref{eq:cgad-esbec-dl}, and $\mathrm{GSON}$ denotes the standard Gram--Schmidt orthonormalization procedure to preserve that $(\phi^{n+1},v_1^{n+1},\ldots,v_k^{n+1})$ satisfies the constraints \eqref{eq:cgad-idxk-bec-cons}. We remark that $\mathrm{GSON}$ in \eqref{eq:cgad-esbec-semi} can also be implemented with its variants (e.g., the modified Gram-Schmidt algorithm or the Gram-Schmidt with re-orthogonalization) to overcome the numerical instability (of round-off errors) that may occur in some extreme and ill-conditioned cases. We choose the current version of $\mathrm{GSON}$ (i.e., the standard Gram--Schmidt procedure) in our numerical experiments for simplicity since it works well for all cases of this paper.

Clearly, the main computational cost of the scheme \eqref{eq:cgad-esbec-semi} at each time step is to solve a completely decoupled system of $k+1$ linear elliptic equations with constant coefficients. All equations in the system take the same form: $-\frac{\tau}{2}\Delta u+u=f$, only with different right-hand-side terms $f$. Thus, they can be solved very efficiently, especially when a fast Poisson solver (e.g., fast Fourier transform) and parallel algorithms are available.

In our numerical computation, the iterative scheme \eqref{eq:cgad-esbec-semi} for computing index-$k$ excited states of a single-component BEC is stopped when the following stopping criteria are satisfied:
\begin{equation}\label{eq:stopping}
\|F^n\|_{\infty}<\varepsilon, \quad
\frac{\|\phi^{n+1}-\phi^n\|_{\infty}}{\tau}<\varepsilon, \quad
\frac{\|v_i^{n+1}-v_i^n\|_{\infty}}{\tau}<\varepsilon,\quad i=1,2,\ldots,k,
\end{equation}
where $F^n:=-\frac12\Delta\phi^n+V(\mathbf{x})\phi^n+\beta|\phi^n|^2\phi^n-\mu^n\phi^n$ is the residual of the Euler--Lagrange equation \eqref{eq:esbec-tigpe1} at $(\mu^n,\phi^n)$, and $\varepsilon>0$ is a given tolerance.

\begin{remark}
In order to improve the computational efficiency of the scheme \eqref{eq:cgad-esbec-semi}, one can introduce a suitable stabilization term \cite{BD2004SISC,BCL2006JCP,BC2013KRM} with constant coefficient for each equation in \eqref{eq:cgad-esbec-semi} so that the larger step length can be chosen in practice. Our numerical experiments show that such a stabilized version of \eqref{eq:cgad-esbec-semi} is efficient. However, due to the limit of page, we leave the rigorous stability analysis for the scheme \eqref{eq:cgad-esbec-semi} to future work. It is worthwhile to mention that, on the stability at large step size for index-1 saddle points of functionals, one existing approach is to use the iterative minimization formulation (IMF) \cite{GLZ2016JCP} to have a sequence of minimization problems and to design a convex splitting method \cite{GZ2018JCP} to minimize the auxiliary functional at each cycle of the IMF. 
\end{remark}%

\begin{remark}
If one takes $k=0$ (i.e., remove all approximations of the unstable directions $v_i$) in \eqref{eq:cgad-esbec}, then the CGAD \eqref{eq:cgad-esbec} reduces to the continuous normalized gradient flow (CNGF) \cite{BD2004SISC} for computing the ground state of single-component BECs, and the corresponding time discretization scheme \eqref{eq:cgad-esbec-semi} becomes the backward-forward Euler scheme (followed by a normalization step) for the CNGF (see \cite{LC2021SISC}). 
\end{remark}

\subsection{Numerical results}

We now report the numerical results of the excited states of single-component BECs in 1D and 2D computed by the numerical scheme \eqref{eq:cgad-esbec-semi} of the CGAD. In particular, the asymptotic properties of the energies and chemical potentials of excited states corresponding to different parameters $\beta$ are investigated. Meanwhile, the energies and chemical potentials of the ground state and excited states with different Morse indices are compared.

In our experiments, the following three types of potentials are considered \cite{BC2013KRM}: 
\begin{enumerate}[(i)]
\item the box potential
\begin{equation}\label{eq:Vbox}
  V_{\mathrm{box}}(\mathbf{x})=\begin{cases} 0, & \mathbf{x}\in U=[0,1]^d, \\  \infty, & \mathbf{x}\notin U, \end{cases} \qquad d=1,2,
\end{equation}
\item the harmonic oscillator potential
\begin{equation}\label{eq:Vho}
  V_{\mathrm{ho}}(\mathbf{x}) =
  \begin{cases}
    x^2/2, & d=1, \\
    (x^2+y^2)/2, & d=2,
  \end{cases}
\end{equation}
 \item the harmonic oscillator plus optical lattice potential
\begin{equation}\label{eq:Vhol}
  V_{\mathrm{hol}}(\mathbf{x})=V_{\mathrm{ho}}(\mathbf{x})+
  \begin{cases}
    \kappa\sin^2(\pi x/4), & d=1, \\
    \kappa\big(\sin^2(\pi x/4)+\sin^2(\pi y/4)\big), & d=2,
  \end{cases}
\end{equation}
with $\kappa>0$ the depth of the optical lattice.
\end{enumerate}

We compute excited states by the numerical scheme \eqref{eq:cgad-esbec-semi} of CGAD with time step $\tau=0.01$. The stopping criterion \eqref{eq:stopping} with $\varepsilon=10^{-12}$ is applied. For numerical comparison, we also use the normalized gradient flow \cite{BD2004SISC,LC2021SISC} to compute the ground state. All algorithms are implemented on a bounded domain $U\subset\mathbb{R}^d$ ($d=1,2$) with the spatial sine-pseudospectral discretization (see, e.g., \cite{BCL2006JCP}) with mesh size $h=\frac{1}{32}$.

\subsubsection{Numerical results in 1D}

\begin{example}\label{ex:es1d:box:vb}
In this example, the first few excited states with the box potential $V(x)=V_{\mathrm{box}}(x)$ \eqref{eq:Vbox} in 1D and various interaction coefficient $\beta$ are computed. Then, the asymptotic properties of their energies and chemical potentials are studied. 
\end{example}

Let $\phi_k^\beta$ be the numerical index-$k$ excited state for specified $\beta$ computed with the initial data: $\phi^0(x)=\phi^{\mathrm{box}}_k(x)=\sqrt{2}\sin((k+1)\pi x)$ and $v_i^0(x)=\phi^{\mathrm{box}}_{i-1}(x)=\sqrt{2}\sin(i\pi x)$, $x\in[0,1]$, $i=1,2,\ldots,k$. Fig.~\ref{fig:es1d_box_bxxx_idxxx} plots the profiles of $\phi_k^{\beta}(x)$, $k=1,2,\cdots,9$, with different $\beta=100,3200,102400$. The energies and chemical potentials of $\phi_g^\beta$ and $\phi_k^\beta$ ($k=1,2,\cdots,9$) for various $\beta$ are listed in Table~\ref{tab:box:es-mus} (the initial guess for the ground state $\phi_g^{\beta}$ is taken as $\phi^0(x)=\phi^{\mathrm{box}}_0(x)=\sqrt{2}\sin(\pi x)$). Moreover, the asymptotic behaviors of the energies of excited states in the weakly repulsive interaction regime, i.e., $0<\beta\ll 1$, and the strongly repulsive interaction regime, i.e., $\beta\gg 1$, are shown in Fig.~\ref{fig:es1d_box_idxxx_es_asymptotics}.

\begin{figure}[!ht]
  \centering
  \includegraphics[width=.3\textwidth]{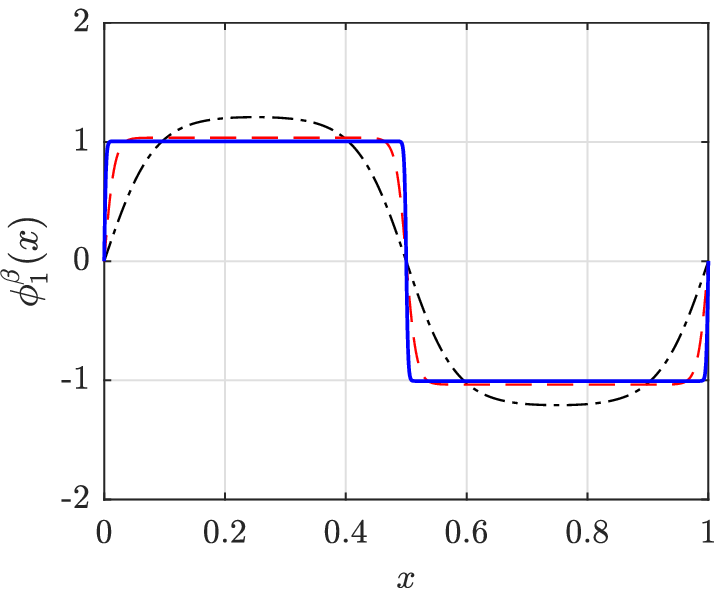}
  \includegraphics[width=.3\textwidth]{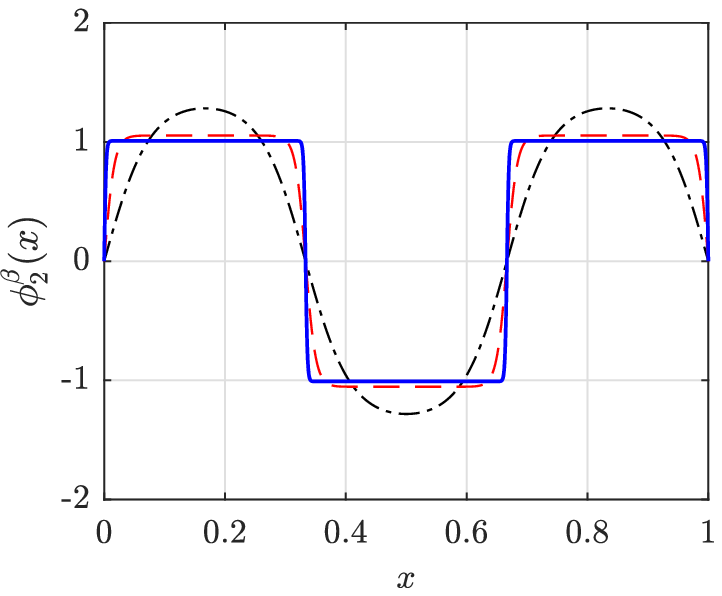}
  \includegraphics[width=.3\textwidth]{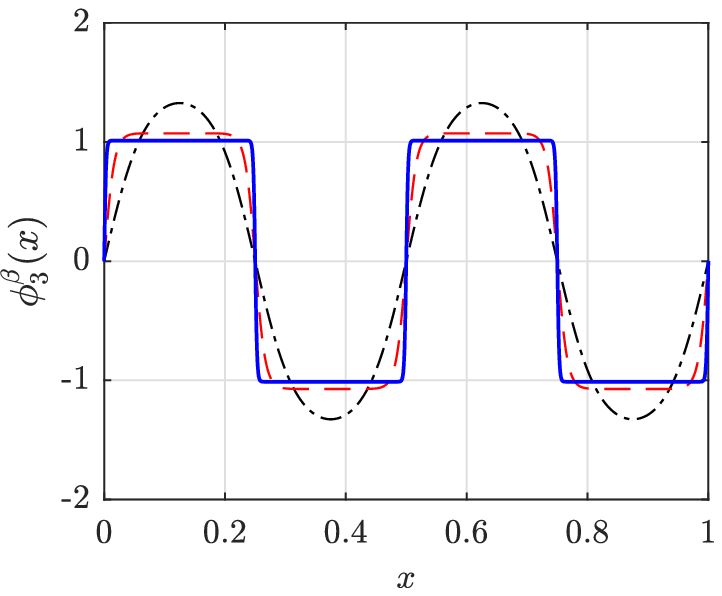} \\
  \includegraphics[width=.3\textwidth]{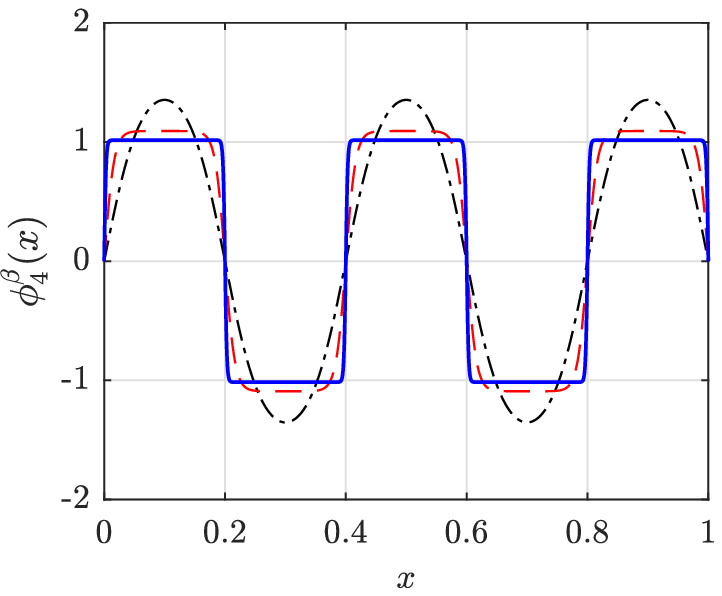}
  \includegraphics[width=.3\textwidth]{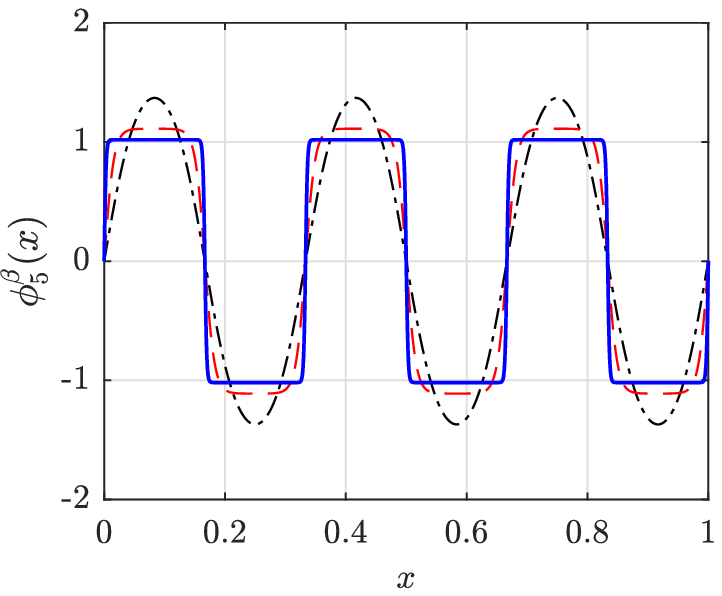}
  \includegraphics[width=.3\textwidth]{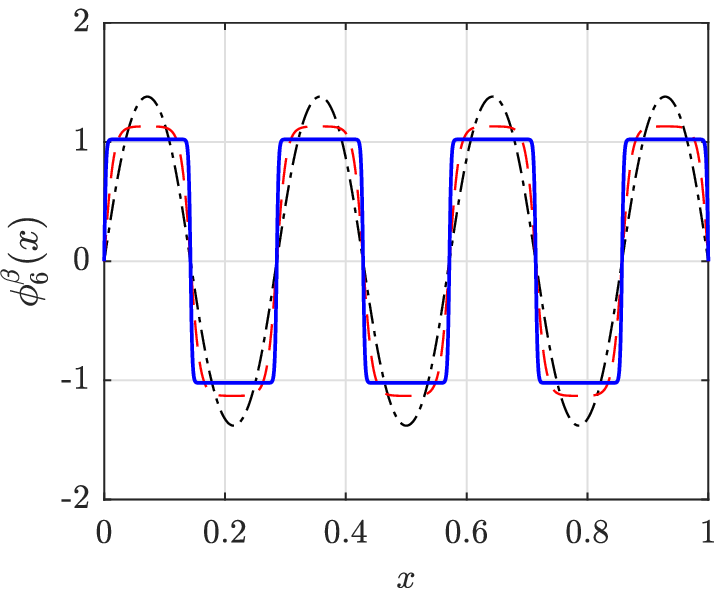} \\
  \includegraphics[width=.3\textwidth]{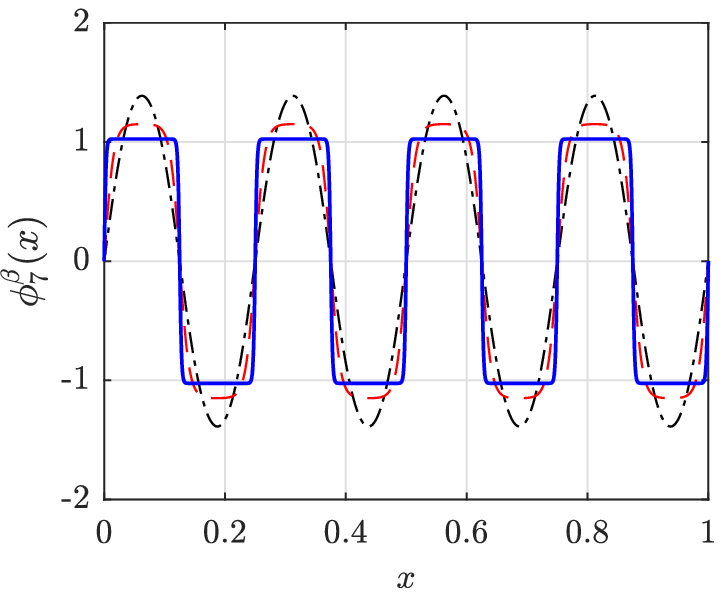}
  \includegraphics[width=.3\textwidth]{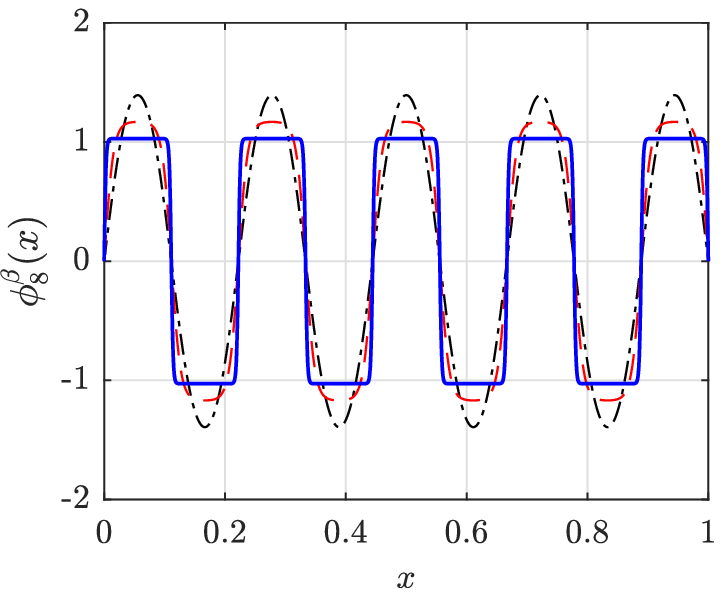}
  \includegraphics[width=.3\textwidth]{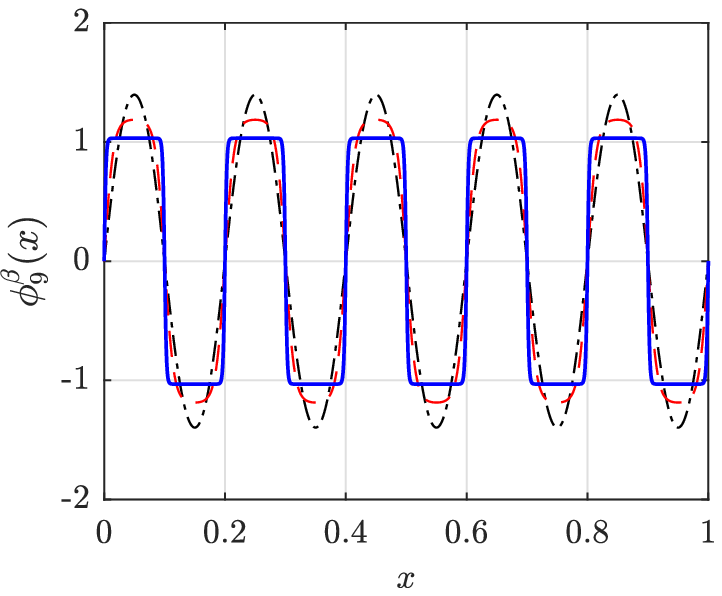}
  \caption{Profiles of index-$k$ ($k=1,2,\ldots,9$) excited states $\phi_k^\beta(x)$ in Example~\ref{ex:es1d:box:vb} with $\beta=100$ (black dash-dot lines), $3200$ (red dash lines) and $102400$ (blue solid lines).}
  \label{fig:es1d_box_bxxx_idxxx}
\end{figure}

\begin{table}[!ht]\footnotesize
  \centering
  \caption{Energies and chemical potentials of the ground state $\phi_g^\beta$ and excited states $\phi_k^\beta$ ($k=1,2,\cdots,9$) versus the interaction coefficient $\beta$ in Example~\ref{ex:es1d:box:vb}.}
  \label{tab:box:es-mus}
\begin{tabular*}{\textwidth}{@{\extracolsep{\fill}}cllllllllll}
\hline
$\beta$ & $E(\phi_g^\beta)$ & $E(\phi_1^\beta)$ & $E(\phi_2^\beta)$ & $E(\phi_3^\beta)$ & $E(\phi_4^\beta)$ & $E(\phi_5^\beta)$ & $E(\phi_6^\beta)$ & $E(\phi_7^\beta)$ & $E(\phi_8^\beta)$ & $E(\phi_9^\beta)$ \\
\hline
0      & 4.93480 & 19.7392 & 44.4132 & 78.9568 & 123.370 & 177.653 & 241.805 & 315.827 & 399.719 & 493.480 \\
0.01   & 4.94230 & 19.7467 & 44.4207 & 78.9643 & 123.378 & 177.660 & 241.813 & 315.835 & 399.726 & 493.488 \\
1      & 5.67870 & 20.4876 & 45.1625 & 79.7064 & 124.120 & 178.403 & 242.555 & 316.577 & 400.469 & 494.230 \\
100    & 65.5472 & 86.4930 & 114.450 & 150.756 & 196.171 & 251.062 & 315.606 & 389.893 & 473.972 & 567.870 \\
1600   & 855.384 & 915.080 & 979.419 & 1048.75 & 1123.46 & 1203.94 & 1290.61 & 1383.89 & 1484.21 & 1591.96 \\
12800  & 6552.87 & 6709.84 & 6871.03 & 7036.56 & 7206.52 & 7381.05 & 7560.27 & 7744.28 & 7933.24 & 8127.25 \\
102400 & 51628.7 & 52061.4 & 52498.2 & 52939.1 & 53384.1 & 53833.4 & 54286.8 & 54744.6 & 55206.6 & 55673.0 \\
\hline
$\beta$ & $\mu(\phi_g^\beta)$ & $\mu(\phi_1^\beta)$ & $\mu(\phi_2^\beta)$ & $\mu(\phi_3^\beta)$ & $\mu(\phi_4^\beta)$ & $\mu(\phi_5^\beta)$ & $\mu(\phi_6^\beta)$ & $\mu(\phi_7^\beta)$ & $\mu(\phi_8^\beta)$ & $\mu(\phi_9^\beta)$ \\
\hline
0      & 4.93480 & 19.7392 & 44.4132 & 78.9568 & 123.370 & 177.653 & 241.805 & 315.827 & 399.719 & 493.480 \\
0.01   & 4.94980 & 19.7542 & 44.4282 & 78.9718 & 123.385 & 177.668 & 241.820 & 315.842 & 399.734 & 493.495 \\
1      & 6.41672 & 21.2345 & 45.9111 & 80.4557 & 124.869 & 179.152 & 243.305 & 317.327 & 401.219 & 494.980 \\
100    & 122.100 & 148.803 & 180.961 & 219.961 & 267.060 & 323.031 & 388.293 & 463.078 & 547.512 & 641.672 \\
1600   & 1682.02 & 1768.20 & 1858.67 & 1953.60 & 2053.11 & 2157.38 & 2266.55 & 2380.85 & 2500.53 & 2625.93 \\
12800  & 13028.3 & 13260.6 & 13497.1 & 13737.7 & 13982.5 & 14231.6 & 14484.9 & 14742.7 & 15004.9 & 15271.6 \\
102400 & 103042  & 103688  & 104338  & 104992  & 105650  & 106313  & 106979  & 107650  & 108324  & 109003  \\
\hline
\end{tabular*}
\end{table}

\begin{figure}[!ht]
  \centering
  \includegraphics[width=.95\textwidth,height=.35\textwidth]{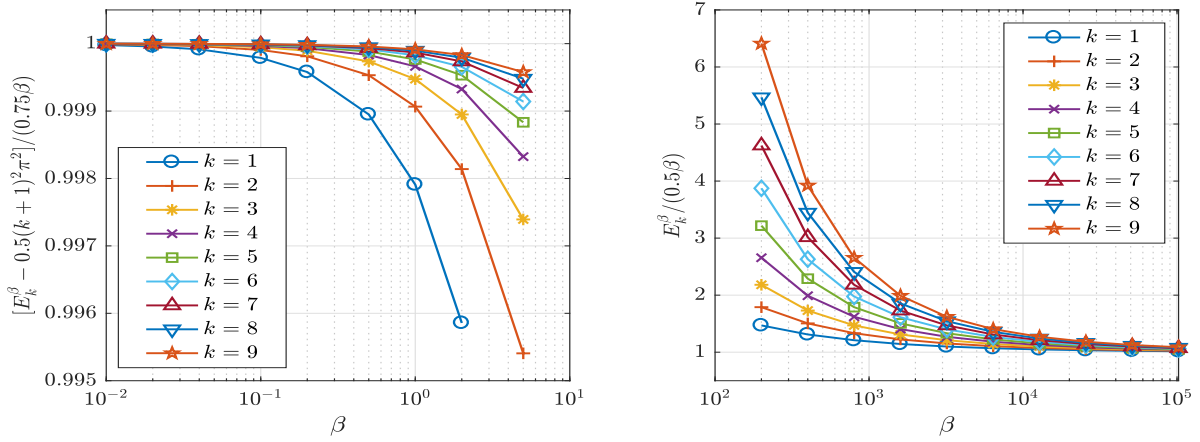}
  \caption{Asymptotic behaviors of the energies for index-$k$ ($k=1,2,\cdots,9$) excited states for the weakly (left) and strongly (right) repulsive interaction regime in Example~\ref{ex:es1d:box:vb}.}
  \label{fig:es1d_box_idxxx_es_asymptotics}
\end{figure}

From the experimental results that are partially shown in Figs.~\ref{fig:es1d_box_bxxx_idxxx}-\ref{fig:es1d_box_idxxx_es_asymptotics} and Table~\ref{tab:box:es-mus}, we have the following numerical observations:

\begin{enumerate}[(i)]
  \item Fig.~\ref{fig:es1d_box_bxxx_idxxx} shows that the index-$k$ excited state $\phi_k^\beta$ is oddly symmetric for $k=1,3,5,7,9$ (i.e., $k$ is odd) and evenly symmetric for $k=2,4,6,8$ (i.e., $k$ is even) with respect to the line $x=1/2$. For relatively small $\beta$, the profile of $\phi_k^\beta(x)$ is similar to that of $\phi_k^0(x)=\phi^{\mathrm{box}}_k(x)=\sqrt{2}\sin((k+1)\pi x)$. When $\beta$ is large, $\phi_k^\beta(x)$ has precisely two boundary layers and $k$ interior layers distributed equidistantly. It looks like a piecewise two-valued function that evenly takes $+1$ and $-1$.
  \item Table~\ref{tab:box:es-mus} shows that, for any $\beta\geq0$, the excited state with higher Morse index possesses higher energy and larger chemical potential, namely,
\[
  E(\phi_g^\beta)<E(\phi_1^\beta)<E(\phi_2^\beta)<\cdots \quad\Longleftrightarrow\quad
  \mu(\phi_g^\beta)<\mu(\phi_1^\beta)<\mu(\phi_2^\beta)<\cdots.
\]
Furthermore, for fixed $k=1,2,\cdots$, we observe that
\[
  \lim_{\beta\to+\infty}\frac{E(\phi_k^\beta)}{E(\phi_g^\beta)}=1,\quad
  \lim_{\beta\to+\infty}\frac{\mu(\phi_k^\beta)}{\mu(\phi_g^\beta)}=1 \quad \mbox{and} \quad
  \lim_{\beta\to+\infty}\frac{\mu(\phi_k^\beta)}{E(\phi_k^\beta)}=2.
\]
  \item From Fig.~\ref{fig:es1d_box_idxxx_es_asymptotics}, one observes that for the weakly repulsive interaction regime,
\[
  E(\phi_k^\beta)=\frac{(k+1)^2\pi^2}{2}+\frac34\beta +o(\beta) = E(\phi_k^{\mathrm{box}}) +o(\beta),
\]
where $\phi_k^{\mathrm{box}}(x)=\sqrt{2}\sin((k+1)x)$, while for the strongly repulsive interaction regime, $E(\phi_k^\beta)\approx\beta/2$.
\end{enumerate}

These observations are consistent with the results in \cite{BL2009AMS,BLZ2007BIMAS}.

\begin{example}\label{ex:es1d:ho:vb}
We now compute the first few excited states for the harmonic oscillator potential $V(x)=V_{\mathrm{ho}}(x)$ \eqref{eq:Vho} in 1D with various interaction coefficient $\beta$ and study the asymptotics of their energies and chemical potentials. 
\end{example}

The computational domain is taken as $U=[-16,16]$. Let $\phi_k^\beta$ and $\phi_g^\beta$ be the numerical index-$k$ excited state and ground state, respectively, for specified $\beta$. The initial data for $\phi_k^\beta$ and $\phi_g^\beta$ are, respectively, chosen as $\phi^0(x)=\phi^{\mathrm{ho}}_k(x)$, $v_i^0(x)=\phi^{\mathrm{ho}}_{i-1}(x)$, $i=1,2,\ldots,k-1$, and $\phi^0(x)=\phi^{\mathrm{ho}}_0(x)$. Fig.~\ref{fig:es1d_ho_bxxx_idxxx} plots the profiles of $\phi_k^{\beta}(x)$, $k=1,2,\cdots,9$, with different $\beta=100,400,1600$. The energies and chemical potentials of $\phi_g^\beta$ and $\phi_k^\beta$ ($k=1,2,\cdots,9$) for various $\beta$ are listed in Table~\ref{tab:box:es-mus}. Fig.~\ref{fig:es1d_ho_idxxx_es_asymptotics} shows that the asymptotics of the energies of $\phi_k^\beta$ ($k=1,2,\cdots,9$) in both the weakly and strongly repulsive interaction regime.

\begin{figure}[!ht]
  \centering
  \includegraphics[width=.3\textwidth]{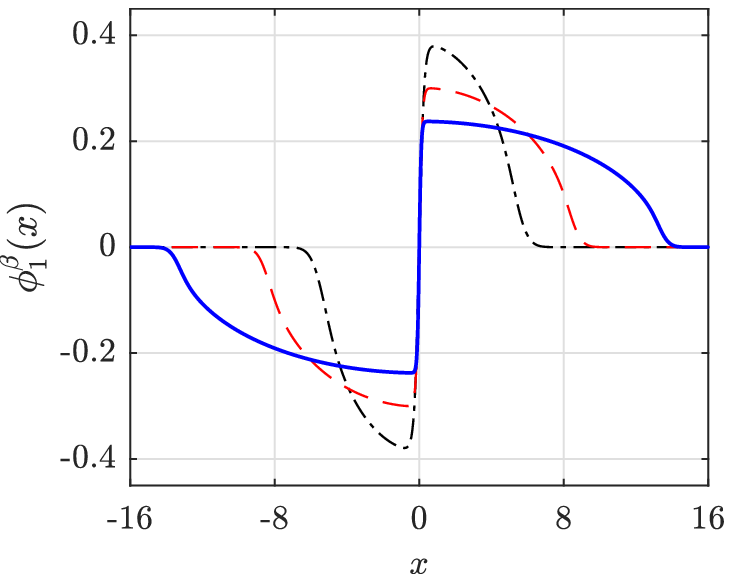}
  \includegraphics[width=.3\textwidth]{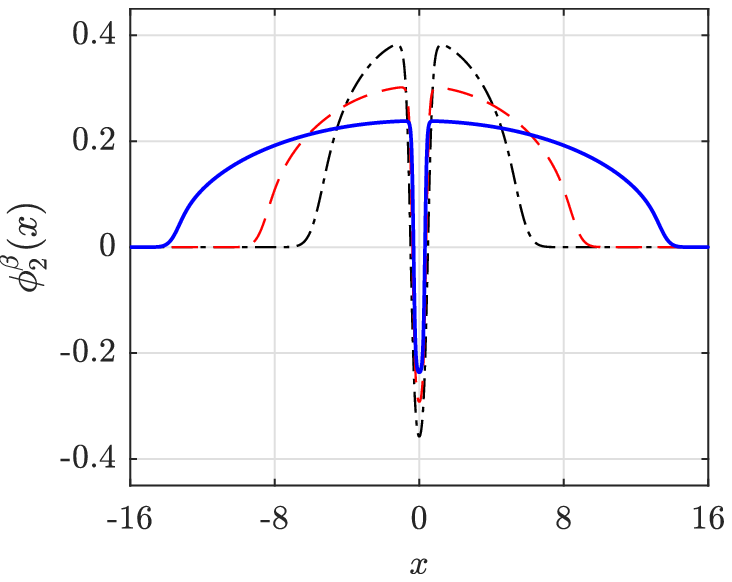}
  \includegraphics[width=.3\textwidth]{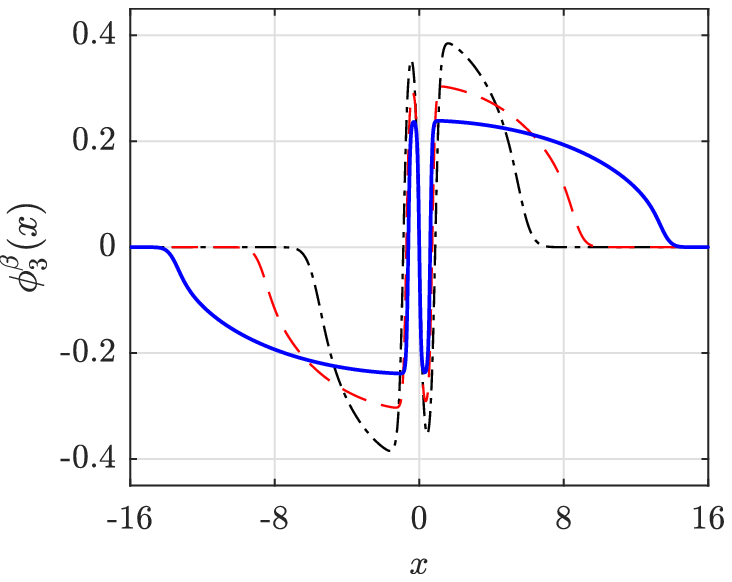} \\
  \includegraphics[width=.3\textwidth]{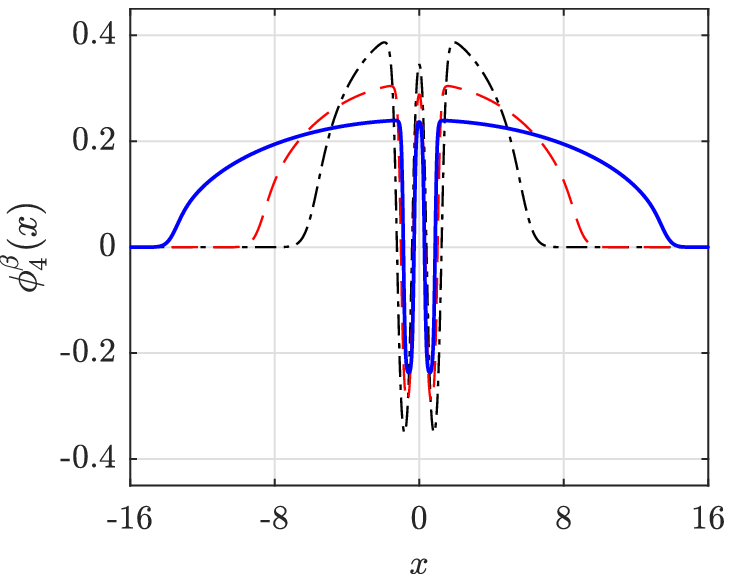}
  \includegraphics[width=.3\textwidth]{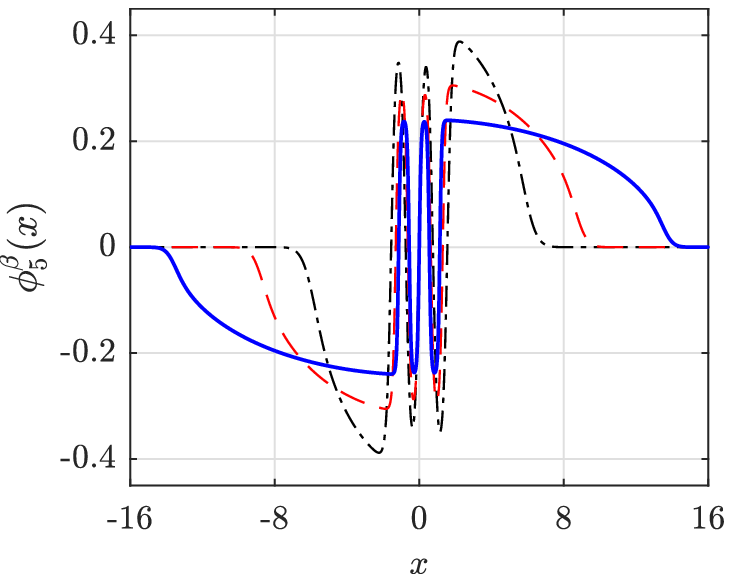}
  \includegraphics[width=.3\textwidth]{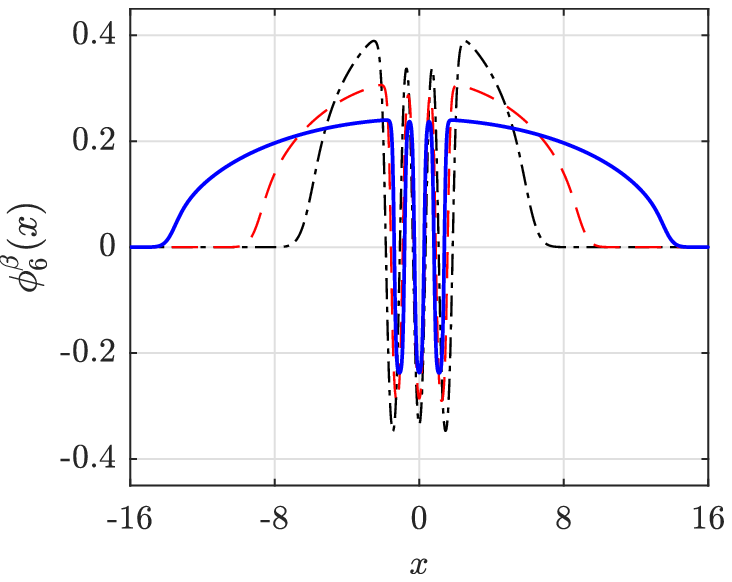} \\
  \includegraphics[width=.3\textwidth]{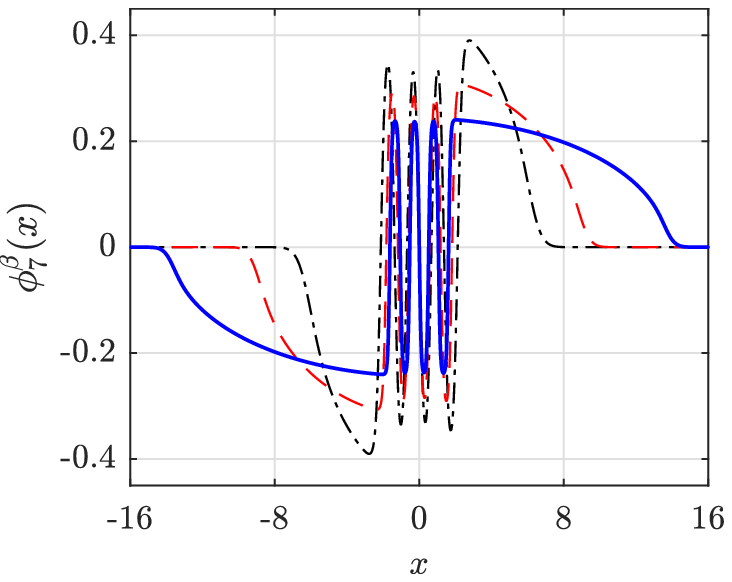}
  \includegraphics[width=.3\textwidth]{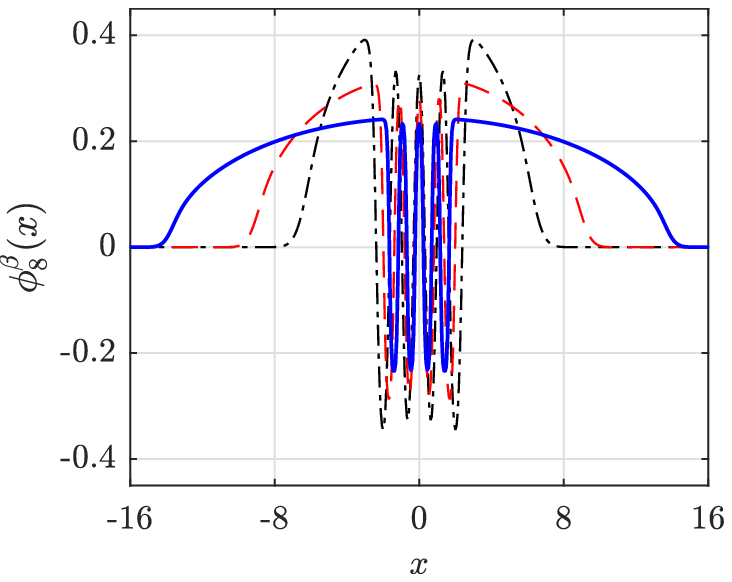}
  \includegraphics[width=.3\textwidth]{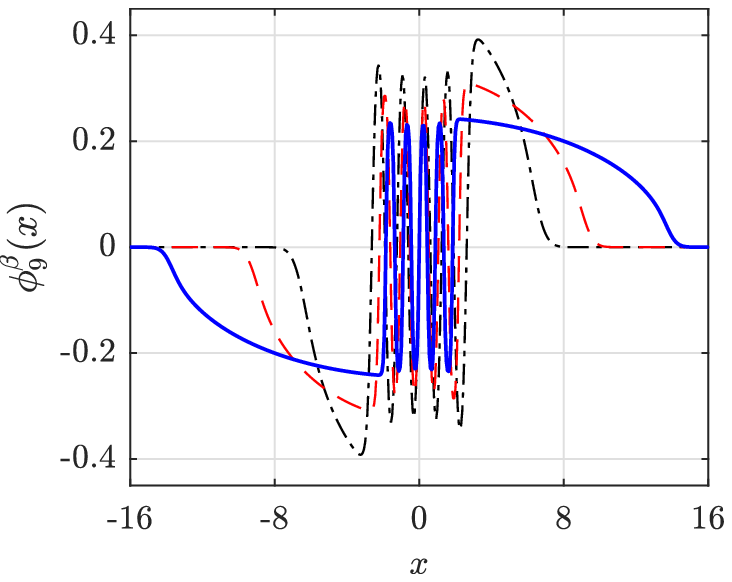}
  \caption{Profiles of index-$k$ ($k=1,2,\ldots,9$) excited states $\phi_k^\beta(x)$ for $\beta=100$ (black dash-dot lines), $400$ (red dash lines) and $1600$ (blue solid lines) in Example~\ref{ex:es1d:ho:vb}.}
  \label{fig:es1d_ho_bxxx_idxxx}
\end{figure}

\begin{table}[!ht]\footnotesize
  \centering
  \caption{Energies and chemical potentials of the ground state $\phi_g^\beta$ and excited states $\phi_k^\beta$ ($k=1,2,\cdots,9$) versus the interaction coefficient $\beta$ in Example~\ref{ex:es1d:ho:vb}.}
  \label{tab:ho:es-mus}
\begin{tabular*}{\textwidth}{@{\extracolsep{\fill}}cllllllllll}
\hline
$\beta$ & $E(\phi_g^\beta)$ & $E(\phi_1^\beta)$ & $E(\phi_2^\beta)$ & $E(\phi_3^\beta)$ & $E(\phi_4^\beta)$ & $E(\phi_5^\beta)$ & $E(\phi_6^\beta)$ & $E(\phi_7^\beta)$ & $E(\phi_8^\beta)$ & $E(\phi_9^\beta)$ \\
\hline
0    & 0.50000 & 1.50000 & 2.50000 & 3.50000 & 4.50000 & 5.50000 & 6.50000 & 7.50000 & 8.50000 & 9.50000 \\
0.01 & 0.50199 & 1.50150 & 2.50128 & 3.50115 & 4.50105 & 5.50098 & 6.50093 & 7.50088 & 8.50084 & 9.50081 \\
1    & 0.68948 & 1.64655 & 2.62626 & 3.61361 & 4.60467 & 5.59787 & 6.59246 & 7.58800 & 8.58424 & 9.58101 \\
10   & 1.94713 & 2.76538 & 3.64568 & 4.55841 & 5.49090 & 6.43654 & 7.39147 & 8.35325 & 9.32029 & 10.2914 \\
100  & 8.50853 & 9.24191 & 10.0079 & 10.7989 & 11.6100 & 12.4378 & 13.2797 & 14.1338 & 14.9985 & 15.8725 \\
400  & 21.3601 & 22.0777 & 22.8116 & 23.5594 & 24.3196 & 25.0909 & 25.8721 & 26.6626 & 27.4614 & 28.2680 \\
1600 & 53.7855 & 54.4968 & 55.2154 & 55.9407 & 56.6723 & 57.4098 & 58.1528 & 58.9011 & 59.6545 & 60.4127 \\
\hline
$\beta$ & $\mu(\phi_g^\beta)$ & $\mu(\phi_1^\beta)$ & $\mu(\phi_2^\beta)$ & $\mu(\phi_3^\beta)$ & $\mu(\phi_4^\beta)$ & $\mu(\phi_5^\beta)$ & $\mu(\phi_6^\beta)$ & $\mu(\phi_7^\beta)$ & $\mu(\phi_8^\beta)$ & $\mu(\phi_9^\beta)$ \\
\hline
0    & 0.50000 & 1.50000 & 2.50000 & 3.50000 & 4.50000 & 5.50000 & 6.50000 & 7.50000 & 8.50000 & 9.50000 \\
0.01 & 0.50398 & 1.50299 & 2.50256 & 3.50229 & 4.50211 & 5.50197 & 6.50186 & 7.50177 & 8.50169 & 9.50162 \\
1    & 0.86994 & 1.79015 & 2.75102 & 3.72629 & 4.70870 & 5.69528 & 6.68456 & 7.67572 & 8.66825 & 9.66182 \\
10   & 3.10724 & 3.86320 & 4.68057 & 5.53782 & 6.42244 & 7.32672 & 8.24566 & 9.17583 & 10.1148 & 11.0610 \\
100  & 14.1343 & 14.8505 & 15.5846 & 16.3352 & 17.1008 & 17.8799 & 18.6713 & 19.4739 & 20.2868 & 21.1092 \\
400  & 35.5775 & 36.2881 & 37.0061 & 37.7313 & 38.4636 & 39.2026 & 39.9480 & 40.6998 & 41.4576 & 42.2212 \\
1600 & 89.6319 & 90.3404 & 91.0518 & 91.7662 & 92.4834 & 93.2035 & 93.9265 & 94.6523 & 95.3809 & 96.1123 \\
\hline
\end{tabular*}
\end{table}

\begin{figure}[!ht]
  \centering
  \includegraphics[width=.95\textwidth,height=.35\textwidth]{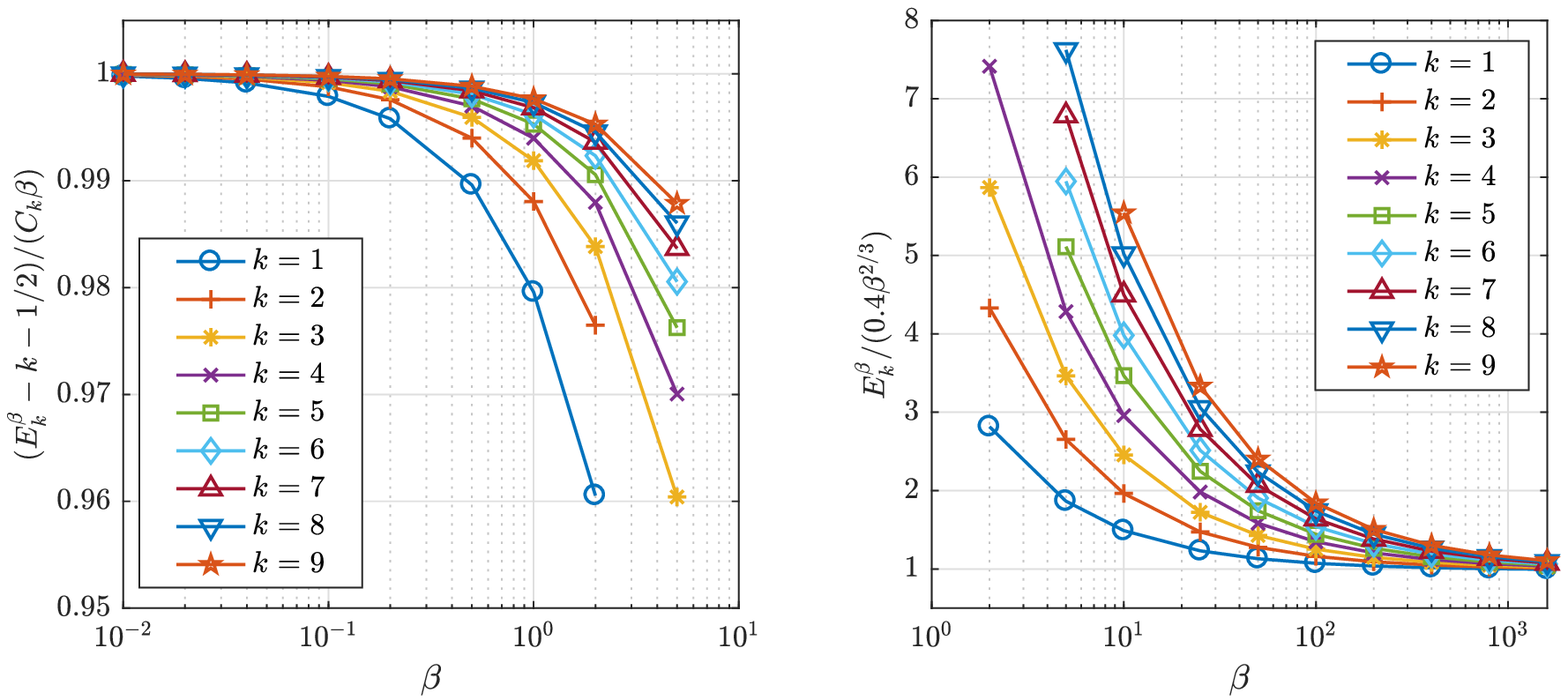}
  \caption{Asymptotic behaviors of the energy for index-$k$ ($k=1,2,\cdots,9$) excited states for the weakly (left) and strongly (right) repulsive interation regime in Example~\ref{ex:es1d:ho:vb}.}
  \label{fig:es1d_ho_idxxx_es_asymptotics}
\end{figure}

From the experimental results that are partially shown in Figs.~\ref{fig:es1d_ho_bxxx_idxxx}-\ref{fig:es1d_ho_idxxx_es_asymptotics} and Table~\ref{tab:ho:es-mus}, we have the following numerical observations:
\begin{enumerate}[(i)]
  \item  Fig.~\ref{fig:es1d_ho_bxxx_idxxx} shows that the index-$k$ excited state $\phi_k^\beta$ is precisely an odd function when $k$ is odd and an even function when $k$ is even. For relatively small $\beta$, the profile of $\phi_k^\beta(x)$ is similar to that of $\phi_k^0(x)=\phi^{\mathrm{ho}}_k(x)$. When $\beta$ is large, $\phi_k^\beta(x)$ has exactly $k$ interior layers or oscillations distributed densely near the center of domain, i.e., $x=0$, thus the multiscale structures are observed.
  \item Table~\ref{tab:ho:es-mus} shows that, for any $\beta\geq0$, all of the excited states we obtain have higher energies than that of the ground state. Moreover, the higher Morse indices the excited states have, the higher energy levels they possess. This observation is also available for the relationship between the Morse indices and the chemical potentials of excited states. That is
\[
  E(\phi_g^\beta)<E(\phi_1^\beta)<E(\phi_2^\beta)<\cdots \quad\Longleftrightarrow\quad
  \mu(\phi_g^\beta)<\mu(\phi_1^\beta)<\mu(\phi_2^\beta)<\cdots.
\]
Meanwhile, for fixed $k=1,2,\cdots$, we observe that
\[
  \lim_{\beta\to\infty}\frac{E(\phi_k^\beta)}{E(\phi_g^\beta)}=1,\quad
  \lim_{\beta\to\infty}\frac{\mu(\phi_k^\beta)}{\mu(\phi_g^\beta)}=1,\quad
  \lim_{\beta\to+\infty}\frac{\mu(\phi_k^\beta)}{E(\phi_k^\beta)}=\frac{5}{3}.
\]
  \item Fig.~\ref{fig:es1d_ho_idxxx_es_asymptotics} shows that, for the weakly repulsive interaction regime,
\[
  E(\phi_k^\beta)=k+\frac12 + C_k\beta + o(\beta) = E(\phi^{\mathrm{ho}}_k) + o(\beta),
\]
with $C_k=\frac{1}{2}\int_{\mathbb{R}}|\phi^{\mathrm{ho}}_k(x)|^4\mathrm{d}x$, whereas for the strongly interaction regime, $E(\phi_k^\beta)\approx\frac{2}{5}\beta^{2/3}$.
\end{enumerate}

These observations are consistent with the results in \cite{BL2009AMS,BLZ2007BIMAS}.

\subsubsection{Numerical results in 2D}

\begin{example}\label{ex:es2d}
In this example, we compute excited states in 2D BECs for the following four cases with various $\beta$. \\
\makebox[9ex][l]{\rm Case I.} $V(x,y)=V_{\mathrm{box}}(x,y)$ \eqref{eq:Vbox}, $U=[0,1]^2$; \\
\makebox[9ex][l]{\rm Case II.} $V(x,y)=V_{\mathrm{ho}}(x,y)$ \eqref{eq:Vho}, $U=[-10,10]^2$;  \\
\makebox[9ex][l]{\rm Case III.} $V(x,y)=V_{\mathrm{hol}}(x,y)$ \eqref{eq:Vhol} with $\kappa=25$, $U=[-10,10]^2$; \\
\makebox[9ex][l]{\rm Case IV.}  $V(x,y)=V_{\mathrm{hol}}(x,y)$ \eqref{eq:Vhol} with $\kappa=50$, $U=[-10,10]^2$.
\end{example}

As suggested by subsection~\ref{sec:esbec-b0}, the information of initial guesses is given in Table~\ref{tab:es2d:igs}. We compute the ground state (by the normalized gradient flow \cite{BD2004SISC,LC2021SISC}) and a few excited states for four cases with various $\beta=0,10,50,100,500,1000$. Tables~\ref{tab:es2d:emu-i}-\ref{tab:es2d:emu-iv} list the energies and chemical potentials of these solutions. Fig.~\ref{fig:es2d_boxhool_b1000_4x5} plots the pseudo-color images of excited states with $\beta=1000$.

\begin{table}[!ht]\footnotesize
  \centering
  \caption{Initial guesses in Example~\ref{ex:es2d}. $\varphi_\mathbf{j}=\phi_\mathbf{j}^{\mathrm{box}}$ \eqref{eq:b0eigpairs-box} for Case I and $\varphi_\mathbf{j}=\phi_\mathbf{j}^{\mathrm{ho}}$ \eqref{eq:b0eigpairs-ho} for Cases II-IV.}
  \label{tab:es2d:igs}
\begin{tabular*}{\textwidth}{@{\extracolsep{\fill}}ccll}
    \hline
    solution & $k$ (index) & initial guess for $\phi$ & initial guess for $(v_1,\ldots,v_k)$ \\ \hline
    $\phi_g$       & 0 & $\varphi_{(0,0)}$ & -- \\ 
    $\phi_{10}$    & 1 & $\varphi_{(1,0)}$ & $\varphi_{(0,0)}$ \\ 
    $\phi_{01}$    & 1 & $\varphi_{(0,1)}$ & $\varphi_{(0,0)}$ \\ 
    $\phi_{10+01}$ & 1 & $[\varphi_{(1,0)}+\varphi_{(0,1)}]/\sqrt{2}$ & $\varphi_{(0,0)}$ \\ 
    $\phi_{10-01}$ & 1 & $[\varphi_{(1,0)}-\varphi_{(0,1)}]/\sqrt{2}$ & $\varphi_{(0,0)}$\\ 
    $\phi_{11}$    & 3 & $\varphi_{(1,1)}$ & $(\varphi_{(0,0)},\varphi_{(1,0)},\varphi_{(0,1)})$ \\
    \hline
\end{tabular*}
\end{table}

\begin{table}[!ht]\footnotesize
  \centering
  \caption{Energies and chemical potentials of ground and excited states for Case I in Example~\ref{ex:es2d}.}
  \label{tab:es2d:emu-i}
  \begin{tabular*}{\textwidth}{@{\extracolsep{\fill}}ccccccc}
\hline
$\beta$
 & $E_g$       
 & $E_{10}$    
 & $E_{01}$    
 & $E_{10+01}$ 
 & $E_{10-01}$ 
 & $E_{11}$  \\  \hline
 0    & 9.8696  & 24.6740 & 24.6740 & 24.6740 & 24.6740 & 39.4784 \\ 
 10   & 19.4655 & 34.7611 & 34.7611 & 36.3205 & 36.3205 & 50.1222 \\ 
 50   & 49.2110 & 67.5593 & 67.5593 & 72.1768 & 72.1768 & 86.3251 \\ 
 100  & 81.8684 & 103.473 & 103.473 & 110.034 & 110.034 & 125.648 \\ 
 500  & 314.632 & 351.897 & 351.897 & 365.667 & 365.667 & 389.910 \\ 
 1000 & 589.286 & 638.718 & 638.718 & 657.680 & 657.680 & 688.933 \\ 
 \hline
$\beta$
& $\mu_g$
& $\mu_{10}$
& $\mu_{01}$
& $\mu_{10+01}$
& $\mu_{10-01}$
& $\mu_{11}$ \\ \hline
0    & 9.8696  & 24.6740 & 24.6740 & 24.6740 & 24.6740 & 39.4784 \\ 
10   & 28.0732 & 44.0760 & 44.0760 & 46.9070 & 46.9070 & 60.2603 \\ 
50   & 83.3738 & 105.336 & 105.336 & 112.458 & 112.458 & 127.971 \\ 
100  & 145.019 & 172.513 & 172.513 & 182.220 & 182.220 & 200.761 \\ 
500  & 594.368 & 646.225 & 646.225 & 666.306 & 666.306 & 698.892 \\ 
1000 & 1131.39 & 1201.69 & 1201.69 & 1229.47 & 1229.47 & 1272.81 \\ 
\hline
  \end{tabular*}
\end{table}

\begin{table}[!ht]\footnotesize
  \centering
  \caption{Energies and chemical potentials of ground and excited states for Case II in Example~\ref{ex:es2d}.}
  \label{tab:es2d:emu-ii}
   \begin{tabular*}{\textwidth}{@{\extracolsep{\fill}}ccccccc}
\hline
$\beta$
 & $E_g$       
 & $E_{10}$    
 & $E_{01}$    
 & $E_{10+01}$ 
 & $E_{10-01}$ 
 & $E_{11}$  \\  \hline
 0   & 1.0000  & 2.0000  & 2.0000  & 2.0000  & 2.0000  & 3.0000  \\
 10  & 1.5923  & 2.4916  & 2.4916  & 2.4916  & 2.4916  & 3.4003  \\
 50  & 2.8960  & 3.7111  & 3.7111  & 3.7111  & 3.7111  & 4.5283  \\
 100   & 3.9459  & 4.7329  & 4.7329  & 4.7329  & 4.7329  & 5.5204  \\
 500   & 8.5118  & 9.2567  & 9.2567  & 9.2567  & 9.2567  & 10.0014  \\
 1000  & 11.9718  & 12.7059  & 12.7059  & 12.7059  & 12.7059  & 13.4399  \\
 \hline
$\beta$
& $\mu_g$
& $\mu_{10}$
& $\mu_{01}$
& $\mu_{10+01}$
& $\mu_{10-01}$
& $\mu_{11}$ \\ \hline
 0   & 1.0000  & 2.0000  & 2.0000  & 2.0000  & 2.0000  & 3.0000  \\
 10  & 2.0638  & 2.9094  & 2.9094  & 2.9094  & 2.9094  & 3.7618  \\
 50  & 4.1430  & 4.9128  & 4.9128  & 4.9128  & 4.9128  & 5.6813  \\
 100   & 5.7598  & 6.5109  & 6.5109  & 6.5109  & 6.5109  & 7.2613  \\
 500   & 12.6783  & 13.4051  & 13.4051  & 13.4051  & 13.4051  & 14.1317  \\
 1000  & 17.8886  & 18.6097  & 18.6097  & 18.6097  & 18.6097  & 19.3306  \\
\hline
  \end{tabular*}
\end{table}

\begin{table}[!ht]\footnotesize
  \centering
  \caption{Energies and chemical potentials of ground and excited states for Case III in Example~\ref{ex:es2d}.}
  \label{tab:es2d:emu-iii}
\begin{tabular*}{\textwidth}{@{\extracolsep{\fill}}ccccccc}
\hline
$\beta$
 & $E_g$       
 & $E_{10}$    
 & $E_{01}$    
 & $E_{10+01}$ 
 & $E_{10-01}$ 
 & $E_{11}$  \\  \hline
 0   & 5.4894  & 10.8158  & 10.8158  & 10.8158  & 10.8158  & 16.1421  \\
 10  & 8.6291  & 13.0150  & 13.0150  & 12.8903  & 12.8903  & 17.9353  \\
 50  & 13.4615  & 16.4307  & 16.4307  & 15.4508  & 15.4508  & 20.1165  \\
 100   & 16.0172  & 18.7438  & 18.7438  & 17.5627  & 17.5627  & 21.6350  \\
 500   & 24.5175  & 26.3802  & 26.3802  & 25.6729  & 25.6729  & 28.2840  \\
 1000  & 29.8150  & 31.4142  & 31.4142  & 30.8570  & 30.8570  & 33.0400  \\
 \hline
$\beta$
& $\mu_g$
& $\mu_{10}$
& $\mu_{01}$
& $\mu_{10+01}$
& $\mu_{10-01}$
& $\mu_{11}$ \\ \hline
 0   & 5.4894  & 10.8158  & 10.8158  & 10.8158  & 10.8158  & 16.1421  \\
 10  & 11.0942  & 14.3353  & 14.3353  & 13.9468  & 13.9468  & 18.9615  \\
 50  & 16.7198  & 19.4602  & 19.4602  & 17.9147  & 17.9147  & 21.9572  \\
 100   & 20.2267  & 22.3176  & 22.3176  & 21.2398  & 21.2398  & 24.2538  \\
 500   & 31.2499  & 32.7361  & 32.7361  & 32.2639  & 32.2639  & 34.1864  \\
 1000  & 38.3834  & 39.6557  & 39.6557  & 39.2391  & 39.2391  & 41.0250  \\
\hline
\end{tabular*}
\end{table}

\begin{table}[!ht]\footnotesize
  \centering
  \caption{Energies and chemical potentials of ground and excited states for Case IV in Example~\ref{ex:es2d}.}
  \label{tab:es2d:emu-iv}
\begin{tabular*}{\textwidth}{@{\extracolsep{\fill}}ccccccc}
\hline
$\beta$
 & $E_g$       
 & $E_{10}$    
 & $E_{01}$    
 & $E_{10+01}$ 
 & $E_{10-01}$ 
 & $E_{11}$  \\  \hline
 0  & 7.7626  & 15.3637  & 15.3637  & 15.3637  & 15.3637  & 22.9649  \\
 10  & 12.2495  & 17.1385  & 17.1385  & 16.6037  & 16.6037  & 23.7793  \\
 50  & 17.6091  & 21.4544  & 21.4544  & 19.8357  & 19.8357  & 25.6103  \\
 100   & 20.8412  & 24.1152  & 24.1152  & 22.6530  & 22.6530  & 27.4948  \\
 500   & 32.2079  & 34.6044  & 34.6044  & 33.8998  & 33.8998  & 37.0849  \\
 1000  & 39.6188  & 41.7623  & 41.7623  & 41.0769  & 41.0769  & 43.9528  \\
 \hline
$\beta$
& $\mu_g$
& $\mu_{10}$
& $\mu_{01}$
& $\mu_{10+01}$
& $\mu_{10-01}$
& $\mu_{11}$ \\ \hline
 0   & 7.7626  & 15.3637  & 15.3637  & 15.3637  & 15.3637  & 22.9649  \\
 10  & 15.7812  & 18.6102  & 18.6102  & 17.5856  & 17.5856  & 24.2988  \\
 50  & 21.7101  & 24.9488  & 24.9488  & 23.2633  & 23.2633  & 27.6786  \\
 100   & 25.9323  & 28.4524  & 28.4524  & 27.4443  & 27.4443  & 30.9539  \\
 500   & 41.7854  & 43.8228  & 43.8228  & 43.2263  & 43.2263  & 46.1402  \\
 1000  & 51.2663  & 52.9797  & 52.9797  & 52.3203  & 52.3203  & 54.7172  \\
\hline
\end{tabular*}
\end{table}

\begin{figure}[!ht]
  \centering
  \includegraphics[width=.98\textwidth]{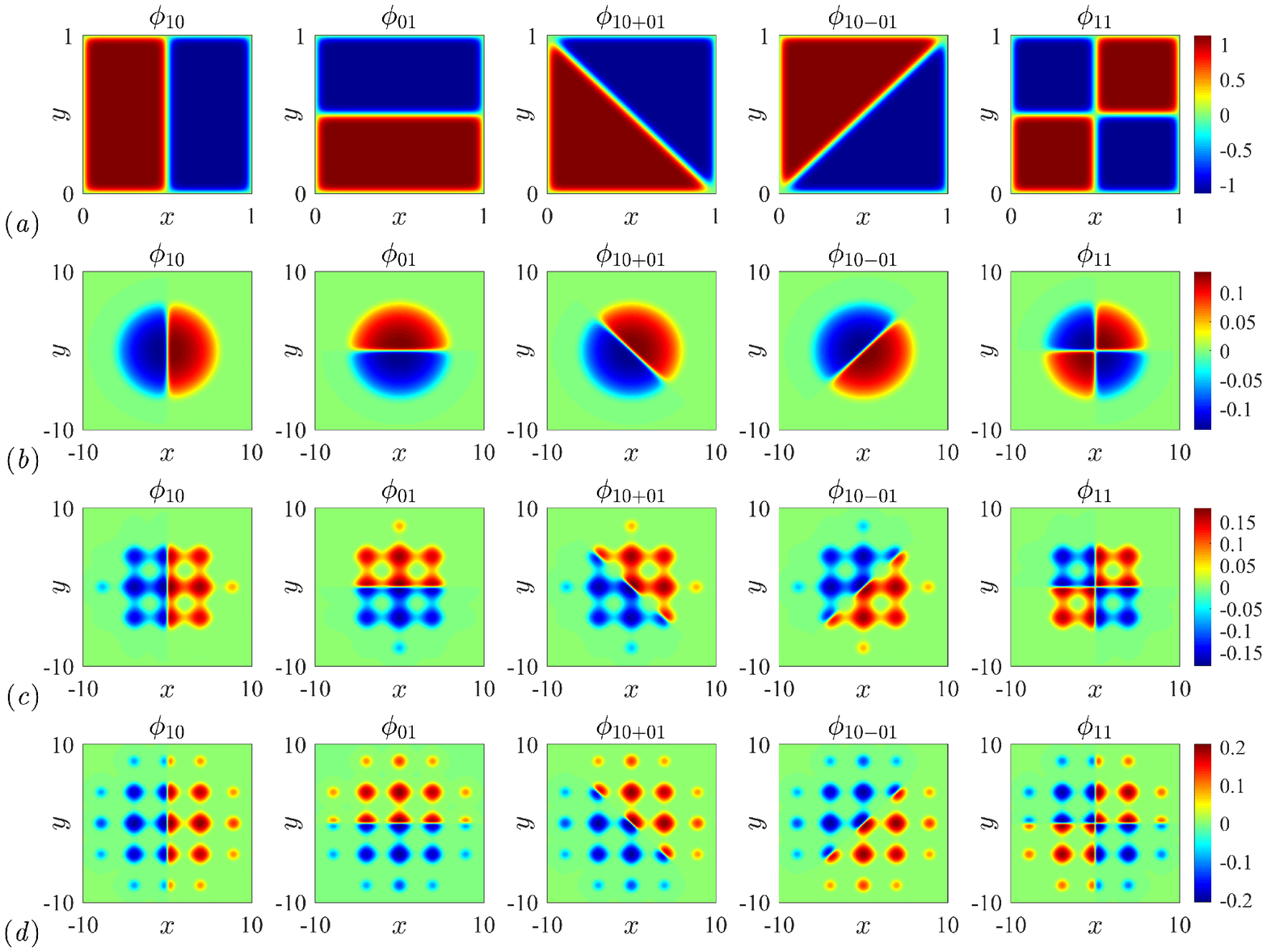}
  \caption{Four index-1 excited states $\phi_{10}(x,y)$ (left column), $\phi_{01}(x,y)$ (second column), $\phi_{10+01}(x,y)$ (third column), $\phi_{10-01}(x,y)$ (fourth column) and an index-3 excited state $\phi_{11}(x,y)$ (right column) with $\beta=1000$ in Example~\ref{ex:es2d}. $(a)\sim(d)$ for Cases I$\sim$IV, respectively.}
  \label{fig:es2d_boxhool_b1000_4x5}
\end{figure}

From the numerical results shown in Fig.~\ref{fig:es2d_boxhool_b1000_4x5}, Tables~\ref{tab:es2d:emu-i}-\ref{tab:es2d:emu-iv}, and additional experimental results not shown here, we have the following numerical observations:
\begin{enumerate}[(i)]
  \item From Tables~\ref{tab:es2d:emu-i}-\ref{tab:es2d:emu-iv}, we observe that for each case, $E(\phi_{10})=E(\phi_{01})$, $\mu(\phi_{10})=\mu(\phi_{01})$, $E(\phi_{10+01})=E(\phi_{10-01})$, and $\mu(\phi_{10+01})=\mu(\phi_{10-01})$. From Fig.~\ref{fig:es2d_boxhool_b1000_4x5}, the profiles of $\phi_{01}$ and $\phi_{10-01}$ can be obtained from that of $\phi_{10}$ and $\phi_{10+01}$, respectively, by a $90^\circ$ rotation. Moreover, some boundary/interior layers and multiscale structures are observed. It is worthwhile to point out that, the shape and symmetry of excited states are independent of the shape of domain if $V(\mathbf{x})$ is a harmonic or optical lattice potential and the computational domain is large enough so that the error of domain truncation can be ignored, whereas they are affected by the shape of domain if $V(\mathbf{x})$ is a box potential.
  \item Tables~\ref{tab:es2d:emu-i}-\ref{tab:es2d:emu-iv} show the following facts:
  \begin{itemize}
  	\item For Case I with $\beta>0$,
\begin{align*}
  & E(\phi_g)<E(\phi_{10})=E(\phi_{01})<E(\phi_{10+01})=E(\phi_{10-01})<E(\phi_{11}), \\
  & \mu(\phi_g)<\mu(\phi_{10})=\mu(\phi_{01})<\mu(\phi_{10+01})=\mu(\phi_{10-01})<\mu(\phi_{11}).
\end{align*}
\item For Case II with $\beta>0$ (or each case with $\beta=0$),
\begin{align*}
  & E(\phi_g)<E(\phi_{10})=E(\phi_{01})=E(\phi_{10+01})=E(\phi_{10-01})<E(\phi_{11}), \\
  & \mu(\phi_g)<\mu(\phi_{10})=\mu(\phi_{01})=\mu(\phi_{10+01})=\mu(\phi_{10-01})<\mu(\phi_{11}).
\end{align*}
\item For Cases III and IV with $\beta>0$,
\begin{align*}
  & E(\phi_g)<E(\phi_{10+01})=E(\phi_{10-01})<E(\phi_{10})=E(\phi_{01})<E(\phi_{11}), \\
  & \mu(\phi_g)<\mu(\phi_{10+01})=\mu(\phi_{10-01})<\mu(\phi_{10})=\mu(\phi_{01})<\mu(\phi_{11}).
\end{align*}
\end{itemize}
Consequently, for all cases in this example, the order of energies is consistent with that of chemical potentials of solutions we obtained. The first excited states are exactly index-1 excited states, but different index-1 excited states may have different energies and chemical potentials. Moreover, the index-3 excited state $\phi_{11}$ possess higher energy and larger chemical potential than those of index-1 excited states.
\end{enumerate}

These numerical results indicate that the Morse index of the excited state has a certain monotonous dependence on energy and chemical potential (i.e., the higher the index, the larger the energy and chemical potential), but generally there is no strict one-to-one correspondence.

\section{Concluding remarks}\label{sec:conclusion}
In this paper, a constrained gentlest ascent dynamics (CGAD) for finding general constrained saddle points with any specified Morse index was proposed. The linearly stable steady state of the CGAD was proved to be exactly a nondegenerate constrained saddle point with the corresponding index. The locally exponential convergence of an idealized CGAD around a nondegenerate constrained saddle point with the corresponding Morse index was also provided. Moreover, the CGAD was applied to compute some excited states of single-component Bose--Einstein condensates by finding constrained saddle points of the corresponding Gross--Pitaevskii energy functional under the normalization constraint. The properties of excited states were studied both mathematically and numerically. Extensive numerical results were reported to show the effectiveness and robustness of our method and demonstrate some interesting physics. It is worthwhile to point out that the CGAD can be applied to solve many other scientific problems. And, many optimization and preconditioning techniques can be used to further improve the computational efficiency of the CGAD. Some related works are ongoing.

\section*{Acknowledgments}
\addcontentsline{toc}{section}{Acknowledgments}
This work was supported by the NSFC grants 12101252, 12171148, 11971007, 11771138 and the innovation platform open fund of the Education Department in Hunan Province (18K025). The work of W. Liu was also partially supported by the International Postdoctoral Exchange Fellowship Program No. PC2021024 and the Guangdong Basic and Applied Basic Research Foundation grant 2022A1515010351.

\appendix

\section{Proof of \texorpdfstring{Lemma~\ref{lem:cgad-idxk-conpty}}{Lemma 3.2}}\label{sec:pf-cgad-idxk-conpty}
\normalcolor
For $l=1,2,\ldots,m$ and $i=1,2,\ldots,k$, applying \eqref{eq:cgad-idxk} and noting that $\langle G_l'(u),\hat{H}(u)v_i\rangle=0$, we have
\begin{align*}
  \frac{\mathrm{d}}{\mathrm{d}t}\langle G_l'(u),v_i\rangle 
 &= \langle G_l''(u)\dot{u},v_i\rangle + \langle G_l'(u),\dot{v}_i\rangle \\
 &= -\frac{1}{\gamma_0}\bigg\langle G_{l}''(u)v_i,\, F(u) - 2\sum_{j=1}^k\langle F(u),v_j\rangle v_j\bigg\rangle \\
 &\quad \; + \frac{1}{\gamma_i}\sum_{j=1}^i \lambda_{ij}\langle G_l'(u),v_j\rangle + \frac{1}{\gamma_i}\sum_{l'=1}^m \bar{\lambda}_{il'}\langle G_l'(u),G_{l'}'(u)\rangle.
\end{align*}
By the definition of $\bar{\lambda}_{il'}$ in \eqref{eq:cgad-idxk-lambdaibar}, which is equivalent to
$$
-\frac{1}{\gamma_0}\bigg\langle G_{l}''(u)v_i,\, F(u) - 2\sum_{j=1}^k\langle F(u),v_j\rangle v_j\bigg\rangle+\frac{1}{\gamma_i}\sum_{l'=1}^m \bar{\lambda}_{il'}\langle G_l'(u),G_{l'}'(u)\rangle=0,
$$
it holds,
\begin{align*}
  \frac{\mathrm{d}}{\mathrm{d}t}
  \begin{pmatrix}
  \langle G_l'(u),v_1\rangle \\
  \langle G_l'(u),v_2\rangle \\
  \vdots \\
  \langle G_l'(u),v_k\rangle \\
  \end{pmatrix} = 
  \begin{pmatrix}
  \tilde{\lambda}_{11} & 0 & \cdots & 0 \\
  \tilde{\lambda}_{21} & \tilde{\lambda}_{22} & \cdots & 0 \\
  \vdots & \vdots & \ddots & \vdots \\
  \tilde{\lambda}_{k1} & \tilde{\lambda}_{k2} & \cdots & \tilde{\lambda}_{kk} \\
  \end{pmatrix}
  \begin{pmatrix}
  \langle G_l'(u),v_1\rangle \\
  \langle G_l'(u),v_2\rangle \\
  \vdots \\
  \langle G_l'(u),v_k\rangle \\
  \end{pmatrix},\quad l=1,2,\ldots,m,
\end{align*}
with $\tilde{\lambda}_{ij}:=\lambda_{ij}/\gamma_i$ ($1\leq j\leq i\leq k$). Then the conclusion \eqref{eq:cgad-idxk-conpty2} follows from the initial condition \eqref{eq:cgad-idxk-init2}. Moreover, by using \eqref{eq:cgad-idxk} and \eqref{eq:cgad-idxk-conpty2}, and noting that $\langle G_l'(u),F(u)\rangle=0$, $l=1,2,\ldots,m$, we have
\begin{align*}
  \frac{\mathrm{d}}{\mathrm{d}t}G_l(u)
  = \langle G_l'(u),\dot{u}\rangle
  = \frac{2}{\gamma_0}\sum_{i=1}^k\langle F(u),v_i\rangle \langle G_l'(u),v_i\rangle
  =0,\quad l=1,2,\ldots,m.
\end{align*}
Thus, \eqref{eq:cgad-idxk-conpty1} is verified immediately from \eqref{eq:cgad-idxk-init1}. Furthermore, by using \eqref{eq:cgad-idxk}, \eqref{eq:cgad-idxk-lambdaij} and \eqref{eq:cgad-idxk-conpty2}, we have, for $1\leq j\leq i\leq k$,
\begin{align*}
  \frac{\mathrm{d}}{\mathrm{d} t}\left(\langle v_i,v_j\rangle -\delta_{ij}\right)
  &= \langle \dot{v}_i,v_j\rangle+\langle v_i,\dot{v}_j\rangle \\
  &= - \left(\frac{1}{\gamma_i}+\frac{1}{\gamma_j}\right)\langle \hat{H}(u)v_i,v_j\rangle
    + \frac{1}{\gamma_i}\sum_{l=1}^i \lambda_{il}\langle v_l,v_j\rangle
    + \frac{1}{\gamma_j}\sum_{l=1}^j \lambda_{jl}\langle v_l,v_i\rangle \\
  &= \begin{cases}
  \frac{2}{\gamma_i}\sum_{l=1}^i \lambda_{il}\left(\langle v_i,v_l\rangle-\delta_{il}\right), & j=i, \\
  \frac{1}{\gamma_i}\sum_{l=1}^i \lambda_{il}\left(\langle v_l,v_j\rangle-\delta_{jl}\right) +
  \frac{1}{\gamma_j}\sum_{l=1}^j \lambda_{jl}\left(\langle v_l,v_i\rangle-\delta_{il}\right), & j<i.
  \end{cases}
\end{align*}
Denote $\mathbf{y}$ as the vector of length $\frac{k(k+1)}{2}$ formed by $\left(\langle v_i,v_j\rangle -\delta_{ij}\right),1\leq j\leq i\leq k$. Then, we have $\mathbf{y}'(t)=\mathbf{A}(t)\mathbf{y}(t)$, where $\mathbf{A}$ is a matrix of degree $\frac{k(k+1)}{2}$, whose elements only depend on Lagrange multipliers $\lambda_{ij},1\leq j\leq i\leq k$ \eqref{eq:cgad-idxk-lambdaij} and relaxation constants. The initial condition $\mathbf{y}(0)=\mathbf{0}$ \eqref{eq:cgad-idxk-init} leads to $\mathbf{y}(t)\equiv\mathbf{0}$. That is \eqref{eq:cgad-idxk-conpty3}.
The proof is completed.

\section{Proof of \texorpdfstring{Lemma~\ref{lem:dmudF}}{Lemma 4.1}}\label{sec:pf-dmudF}
\normalcolor
Note that, by \eqref{eq:projgrad-def},
\begin{align*}
\sum_{j=1}^m \big\langle G_i'(u),G_j'(u)\big\rangle \mu_j(u)
= \big\langle G_i'(u),E'(u)\big\rangle,\quad i=1,2,\ldots,m.
\end{align*}
Differentiating in both sides of the above equation and applying the definitions of $F(u)$ \eqref{eq:projgrad-def} and $H(u)$, we obtain
\begin{align*}
\sum_{j=1}^m \big\langle G_i'(u),G_j'(u)\big\rangle \mu_j'(u)
&= G_i''(u)E'(u)+E''(u)G_i'(u)  -\sum_{j=1}^m \mu_j(u)\left[G_i''(u)G_j'(u)+G_j''(u)G_i'(u)\right] \\
&= G_i''(u)F(u)+H(u)G_i'(u), \quad i=1,2,\ldots,m.
\end{align*}
Thus
\begin{align*}
\mu_i'(u)=\sum_{j=1}^m g_{ij}(u)\left[G_j''(u)F(u)+H(u)G_j'(u)\right], \quad i=1,2,\ldots,m.
\end{align*}
For any $v\in T_u\mathcal{M}$, applying definitions of $P_u$ \eqref{eq:Pu-def} and $\hat{H}(u)$ \eqref{eq:Hhat-def}, yields
\begin{align*}
F'(u)v 
&= E''(u)v-\sum_{i=1}^m\mu_i(u)G_i''(u)v-\sum_{i=1}^m\big\langle \mu_i'(u),v\big\rangle G_i'(u) \\
&= H(u)v-\sum_{i=1}^m\sum_{j=1}^m g_{ij}(u) \left[\big\langle G_j''(u)F(u),v\big\rangle+\big\langle H(u)G_j'(u),v\big\rangle\right] G_i'(u) \\
&=\hat{H}(u)v-\sum_{i=1}^m\sum_{j=1}^m g_{ij}(u) \big\langle G_j''(u)F(u),v\big\rangle G_i'(u),
\end{align*}
where the self-adjointness of $H(u)$ and the fact $P_uv=v$ are used.
The proof is completed.

\footnotesize
\bibliographystyle{abbrv}
\bibliography{cgad_refs}

\end{document}